	\newtheorem{thm}{Theorem}[section]
	\newtheorem{cor}[thm]{Corollary}
	\newtheorem{lem}[thm]{Lemma}
	\newtheorem{prop}[thm]{Proposition}
	\theoremstyle{definition}
	\newtheorem{defn}[thm]{Definition}
	\theoremstyle{remark}
	\newtheorem{rem}[thm]{Remark}
	\numberwithin{equation}{section}
	\newtheorem{ex}[thm]{Example}
	\numberwithin{equation}{section}
	\newcommand{\mbb}{\mathbb}
	\newcommand{\ra}{\rightarrow}
	\newcommand{\ep}{\epsilon}
	\newcommand{\no}{\noindent}
	\newcommand{\De}{\Delta}
	\newcommand{\cal}{\mathcal}
	\newcommand{\la}{\lambda}
\begin{document}
	\title{Dynamics of semigroups of entire maps of $\mbb C^k$}
	\keywords{}
	\subjclass{Primary: 32H02  ; Secondary : 32H50}
	\thanks{$^1$ Supported by CSIR-UGC(India) fellowship.}
	
	\author{Sayani Bera  and Ratna Pal$^{1}$} 
	\address{Sayani Bera: Department of Mathematics, Indian Institute of Science,
	Bangalore 560 012, India}
	\email{sayani@math.iisc.ernet.in}
	
	\address{Ratna Pal: Department of Mathematics, Indian Institute of Science,
	Bangalore 560 012, India}
	\email{ratna10@math.iisc.ernet.in}
	
	\pagestyle{plain}
	
	\begin{abstract}
	 The goal of this paper is to study some basic properties of the Fatou and Julia sets for a family of holomorphic endomorphisms of $\mbb C^k,\; k \ge 2.$ We are particularly interested in studying these sets for semigroups generated by various classes of holomorphic endomorphisms of $\mbb C^k,\; k\ge 2.$ We prove that if the Julia set of a semigroup $G$ which is generated by endomorphisms of maximal generic rank $k$ in $\mbb C^k$ contains an isolated point, then $G$ must contain an element that is conjugate to an upper triangular automorphism of $\mbb C^k.$ This generalizes a theorem of Fornaess--Sibony. Secondly, we define recurrent domains for semigroups and provide a description of such domains under some conditions. 
	\end{abstract}
	\maketitle
	\section{Introduction}\label{Introduction}
	The purpose of this note is to study the Fatou--Julia dichotomy, not for the iterates of a single holomorphic endomorphism of $\mbb C^k, \; k \ge 2$, but for a family $\mathcal{F}$ of such maps. The Fatou set of $\mathcal{F}$ will be by definition the largest open set where the family is normal, i.e., given any sequence in $\mathcal{F}$ there exists a subsequence which is uniformly convergent or divergent on all compact subsets of the Fatou set, while the Julia set of $\mathcal{F}$ will be its complement.
	\par\medskip\no
	We are particularly interested in studying the dynamics of families that are semigroups generated by various classes of holomorphic endomorphisms of $\mbb C^k,\; k \ge 2.$ For a collection $\{\psi_{\alpha}\}$ of such maps let
	\[G=\langle \psi_{\alpha}\rangle\]
	denote the semigroup generated by them. The index set to which $\alpha$ belongs is allowed to be uncountably infinite in general. The Fatou set and Julia set  of this semigroup G will be henceforth denoted by $F(G)$ and $J(G)$ respectively. Also for  a holomorphic endomorphism  $\phi$ of $\mbb C^k ,$ $F(\phi)$ and $J(\phi)$, will denote the Fatou set and Julia set  for the family of iterations of $\phi.$ The $\psi_{\alpha}$'s that will be considered in the sequel will belong to one of the following classes:
		\begin{itemize}
	\item $\mathcal{E}_k:$ The set of holomorphic endomorphisms of $\mbb C^k$ which have maximal generic rank $k.$
	\item $\mathcal{I}_k:$  The set of injective holomorphic endomorphisms of $\mbb C^k.$
	\item $\mathcal{V}_k:$ The set of volume preserving biholomorphisms of $\mbb C^k.$
	\item $\mathcal{P}_k:$ The set of proper holomorphic endomorphisms of $\mbb C^k.$
		\end{itemize}
	The main motivation for studying the dynamics of semigroups in higher dimensions comes from the results of Hinkkanen--Martin\cite{HM1} and Fornaess--Sibony \cite{FS}. While \cite{HM1} considers the dynamics of semigroups generated by rational functions on the Riemann sphere, \cite{FS} puts forth several basic results about the dynamics of the iterates of a single holomorphic endomorphism of $\mbb C^k,\; k \ge 2.$ Under such circumstances, it seemed natural to us to study the dynamics of semigroups in higher dimensions.
	\par\medskip\no
	Section 2 deals with basic properties of $F(G)$ and $J(G)$ when $G$ is generated by elements that belong to $\mathcal{E}_k$ and $\mathcal{P}_k.$ The main theorem in Section 3 states that if $J(G)$ contains an isolated point, then $G$ must contain an element that is conjugate to an upper triangular automorphism of $\mbb C^k.$ Finally we define recurrent domains for semigroups in Section 4 and provide a classification of such domains under some conditions which are generalizations of the corresponding statements of Fornaess--Sibony \cite{FS} for the iterates of a single holomorphic endomorphism of $\mbb C^k,\; k\ge  2.$ The classification for recurrent Fatou components for the iterates of holomorphic endomorphisms of  $\mbb P^2$ and $\mbb P^k$ is studied in \cite{FS2} and \cite{Fornaess-Rong} respectively. In \cite{FS2} Fornaess--Sibony also gave a classification of recurrent Fatou components for iterations of H\'{e}non maps inside $K^+$, which was initially considered by Bedford--Smillie in \cite{BS2}. A  classification for non-recurrent, non-wandering  Fatou components of $\mbb P^2$ is given  in  \cite{Weickert}, whereas  a classificiation of invariant Fatou components for nearly dissipative H\'{e}non maps is studied in \cite{Peters-Lyubich}. 
	
	\medskip
	
	\no {\bf Acknowledgement:} We would like to thank Kaushal Verma for valuable discussions and comments.
	
		\section{Properties of the Fatou set and Julia set for a semigroup G}\label{section 1}
		In this section we will prove some basic properties of the Fatou set and the Julia set for semigroups. 	
		\begin{prop}\label{lem1}
			Let $G$ be a semigroup generated by elements of $\mathcal{E}_k$ where $k \ge 2$ and for any $\phi \in G$ define 
			\[\Sigma_{\phi}=\{z \in \mbb C^k: \det{\phi(z)}=0\}.\] Then for every $\phi \in G$
			\begin{enumerate}
				\item[(i)] $\phi(F(G)\setminus \Sigma_{\phi})\subset F(G).$ 
				\item[(ii)] $J(G)\cap \phi(\mbb C^k)\subset \phi(J(G))$, if $G$ is generated by elements of $\mathcal{P}_k$ or $\mathcal{I}_k.$
			\end{enumerate}
			
		\end{prop}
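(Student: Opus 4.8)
The plan is to prove both parts by transferring normality through $\phi$, exploiting the fact that a semigroup is closed under composition, so that $g\circ\phi\in G$ whenever $g\in G$. For part (i), fix $\phi\in G$ and a point $z_0\in F(G)\setminus\Sigma_\phi$. Since $\det\phi(z_0)\neq 0$, the map $\phi$ is a local biholomorphism at $z_0$; as $F(G)$ is open I would choose a neighbourhood $U\subset F(G)$ of $z_0$ on which $\phi$ restricts to a biholomorphism onto the open set $V=\phi(U)$. To see $V\subset F(G)$, take any sequence $\{g_n\}\subset G$. Then $\{g_n\circ\phi\}\subset G$, and since $U\subset F(G)$ this family is normal on $U$; pass to a subsequence $\{g_{n_j}\circ\phi\}$ that is either uniformly convergent or uniformly divergent on every compact subset of $U$. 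Writing each point of $V$ as $\phi(u)$ and using that $\phi^{-1}\colon V\to U$ is biholomorphic (so $\phi^{-1}(K)$ is compact for compact $K\subset V$), the identity $(g_{n_j}\circ\phi)(u)=g_{n_j}(\phi(u))$ shows that $\{g_{n_j}\}$ converges, respectively diverges, uniformly on compact subsets of $V$. Hence $G$ is normal on $V$, so $V\subset F(G)$ and in particular $\phi(z_0)\in F(G)$.

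For part (ii) I would argue by contraposition: assume $w\in\phi(\mbb C^k)$ with $w\notin\phi(J(G))$ and show $w\in F(G)$. The hypothesis $w\notin\phi(J(G))$ means precisely that $\phi^{-1}(w)\subset F(G)$, while $w\in\phi(\mbb C^k)$ guarantees $\phi^{-1}(w)\neq\emptyset$. The first task is to produce an open neighbourhood $V$ of $w$ with $\phi^{-1}(V)\subset F(G)$. If $G$ is generated by elements of $\mathcal{I}_k$, then $\phi$ is injective, hence (by the nonvanishing of the Jacobian determinant of an injective holomorphic self-map) a biholomorphism onto the open set $\phi(\mbb C^k)$; writing $\phi^{-1}(w)=\{z\}$ with $z\in F(G)$ and choosing a neighbourhood $U\subset F(G)$ of $z$, the set $V=\phi(U)$ works since $\phi^{-1}(V)=U$. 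If instead $G$ is generated by elements of $\mathcal{P}_k$, then $\phi$ is proper, so $\phi^{-1}(w)$ is a compact analytic subset of $\mbb C^k$, hence finite; covering it by a relatively compact open set $\mathcal{U}\subset F(G)$ and invoking the standard property of proper maps (every open $\mathcal{U}\supset\phi^{-1}(w)$ admits an open $V\ni w$ with $\phi^{-1}(V)\subset\mathcal{U}$), I obtain $V\ni w$ with $\phi^{-1}(V)\subset F(G)$.

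With $V$ in hand the transfer of normality is uniform across both cases and parallels part (i), the only new feature being that $\phi$ need no longer be injective. Given $\{g_n\}\subset G$, the family $\{g_n\circ\phi\}\subset G$ is normal on the open set $\phi^{-1}(V)\subset F(G)$, so some subsequence $\{g_{n_j}\circ\phi\}$ is uniformly convergent or uniformly divergent on every compact subset of $\phi^{-1}(V)$. For compact $K\subset V$ the set $\phi^{-1}(K)$ is compact (by injectivity in the first case, by properness in the second) and contained in $\phi^{-1}(V)$. Since $(g_{n_j}\circ\phi)(y)=g_{n_j}(\phi(y))$ depends only on $\phi(y)$, the value $g_{n_j}(v)$ is recovered from any preimage $y\in\phi^{-1}(v)$; hence $\sup_{v\in K}|g_{n_j}(v)-g_{n_l}(v)|\le\sup_{y\in\phi^{-1}(K)}|(g_{n_j}\circ\phi)(y)-(g_{n_l}\circ\phi)(y)|$ in the convergent case, and $\min_{v\in K}\|g_{n_j}(v)\|\ge\min_{y\in\phi^{-1}(K)}\|(g_{n_j}\circ\phi)(y)\|$ in the divergent case. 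In either case the corresponding behaviour of $\{g_{n_j}\}$ on $V$ follows, so $G$ is normal on $V$ and $w\in F(G)$.

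The routine parts are the local biholomorphism in (i) and the neighbourhood construction in (ii). The main obstacle I anticipate is the proper case of (ii): one must be certain that normality genuinely passes through the branched, many-to-one map $\phi$. This is exactly where the identity $(g_n\circ\phi)(y)=g_n(\phi(y))$ is essential, for it forces the pulled-back values to depend only on $\phi(y)$ and hence to be constant on the fibres of $\phi$. Consequently the convergence/divergence dichotomy obtained upstairs on the possibly disconnected set $\phi^{-1}(V)$ descends consistently to the connected neighbourhood $V$, with no danger of different sheets over the same point behaving differently.
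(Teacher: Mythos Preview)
Your proof is correct and follows the same underlying idea as the paper: use the semigroup property $g\circ\phi\in G$ to pull normality back through $\phi$, exploiting that $\phi$ is open at the relevant points. The only difference is one of economy: for part (ii) the paper simply observes that a proper (or injective) holomorphic self-map of $\mbb C^k$ is open at \emph{every} point, so the argument of part (i) immediately yields $\phi(F(G))\subset F(G)$ without excising $\Sigma_\phi$, and then $J(G)\cap\phi(\mbb C^k)\subset\phi(J(G))$ is the one-line logical contrapositive; your explicit fibre-by-fibre neighbourhood construction and compactness bookkeeping are correct but unnecessary once this openness is noted.
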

		
		\begin{proof}
			Note that $\phi \in G$ is an open map at any point $z \in F(G)\setminus \Sigma_{\phi}.$ Since for any sequence $\psi_n \in G$, the sequence $\psi_n \circ \phi$ has a convergent subsequence around a neighbourhood of $z$ (say $V_z$), $\psi_n$ also has a convergent subsequence on the open set $\phi(V_z)$ containing $\phi(z).$ 
			
			\smallskip
			
			\no Now if $G$ is generated by elements of $\mathcal{P}_k$ or $\mathcal{I}_k$ then $\phi$ is an open map at every point in $\mbb C^k.$ Then the Fatou set is forward invariant and hence the Julia set is backward invariant in the range of $\phi.$
		\end{proof}
		
	
	\no A family of endomorphisms $\mathcal{F}$ in $\mbb C^k$ is said to be locally uniformly bounded on an open set $\Omega \subset \mbb C^k$ if for every point there exists a small enough neighbourhood of the point (say $V\subset \Omega$) such that $\mathcal{F}$ restricted to $V$ is bounded i.e.,
	\[\|f\|_{V}=\sup_{V}|f(z)|<M\]for some $M>0$ and for every $f \in \mathcal{F}.$
		
		\begin{prop} \label{bound}
			Let $G=\langle\phi_1,\phi_2,\hdots,\phi_n\rangle$, where each $\phi_j \in \mathcal{E}_k$ and let $\Omega_G$ be a Fatou component  of $G$ such that $G$ is locally uniformly bounded on $\Omega_G.$ Then for every $\phi \in G$ the image of $\Omega_G$ under $\phi$ i.e., $\phi(\Omega_G)$ is contained in Fatou set of $G.$
		
		\end{prop}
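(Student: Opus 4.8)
The plan is to reduce everything to the behaviour of $\phi$ along its singular locus $\Sigma_{\phi}$ and then to extract a uniform bound from the hypothesis. Since every generator lies in $\mathcal E_k$ and $\phi$ is a composition of generators, the chain rule gives $\det \phi' \not\equiv 0$, so $\Sigma_{\phi}=\{z:\det \phi'(z)=0\}$ is a proper analytic subvariety of $\mbb C^k$. Hence $\Omega_G\setminus\Sigma_{\phi}$ is open, dense and connected in $\Omega_G$, and $U:=\phi(\Omega_G\setminus\Sigma_{\phi})$ is an open subset of $\mbb C^k$. By Proposition \ref{lem1}(i) we already know $U\subset F(G)$, so the entire difficulty concentrates on the remaining points $\phi(\Omega_G\cap\Sigma_{\phi})$. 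Because $\Omega_G\setminus\Sigma_{\phi}$ is dense in $\Omega_G$ and $\phi$ is continuous, each such point lies in $\overline U$.

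Next I would turn the boundedness hypothesis into a quantitative estimate. Fix $w_0=\phi(z_0)$ with $z_0\in\Omega_G\cap\Sigma_{\phi}$ and choose a ball $B$ with $z_0\in B\Subset\Omega_G$. For an arbitrary $\psi\in G$ the composite $\psi\circ\phi$ again lies in $G$, so local uniform boundedness of $G$ on $\Omega_G$ yields a constant $M$, independent of $\psi$, with $\sup_B|\psi\circ\phi|\le M$; consequently $|\psi|\le M$ on $\phi(B)$, and by continuity $|\psi|\le M$ on $\overline{\phi(B)}=\overline{\phi(B\setminus\Sigma_{\phi})}$. Thus the whole family $G$ is uniformly bounded on the closed set $\overline{\phi(B\setminus\Sigma_{\phi})}$, which contains $w_0$. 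If $w_0$ is an interior point of this closure, then $G$ is uniformly bounded on an honest neighbourhood of $w_0$, and the several–variable Montel theorem makes $G$ normal there, so that $w_0\in F(G)$ and we are finished.

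The hard part will be exactly to show that $w_0$ is interior to $\overline{\phi(B\setminus\Sigma_{\phi})}$; equivalently, that $G$ is uniformly bounded on $W\setminus A$ for some neighbourhood $W$ of $w_0$ and some proper analytic set $A\ni w_0$ carrying the critical-value set $\phi(\Sigma_{\phi})$ near $w_0$. For this I would study $\phi$ through its local branched-covering structure near $z_0$: one records that $\phi(\Sigma_{\phi})$ is locally contained in an analytic hypersurface $A$, and checks that $W\setminus A$ is filled out by images of regular points, so that the bound $M$ produced above persists throughout $W\setminus A$. With a uniform bound on the punctured neighbourhood $W\setminus A$ secured, Riemann's removable-singularity theorem allows every locally uniform limit of a subsequence drawn from $G$ to extend holomorphically across $A$, and a standard normal-family argument then upgrades convergence on $W\setminus A$ to convergence on all of $W$, giving normality of $G$ at $w_0$. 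The genuinely delicate point is that local uniform boundedness supplies only bounds that a priori depend on the base point in $\Omega_G$, whereas preimages of points near $w_0$ may escape towards $\partial\Omega_G$; the real work is therefore to verify that, after transporting the estimate to the target by $\phi$, the bound becomes uniform on a full punctured neighbourhood of each critical value.
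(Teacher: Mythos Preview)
Your reduction to the critical values and the estimate $|\psi|\le M$ on $\phi(B)$ are both fine, but step 3 does not go through for the class $\mathcal E_k$. A map of maximal \emph{generic} rank need not be open, and need not be a local branched covering near its critical points: fibres can be positive--dimensional and the image can be very thin. Take for instance $\phi(z_1,z_2)=(z_1,z_1 z_2)$, which lies in $\mathcal E_2$ since $\det\phi'=z_1\not\equiv 0$. Here $\Sigma_\phi=\{z_1=0\}$ and $\phi(\Sigma_\phi)=\{(0,0)\}$; the image of any small ball $B$ around $z_0=(0,1)$ is a cone $\{0<|w_1|<\ep,\ |w_2/w_1-1|<\ep\}\cup\{(0,0)\}$, which contains \emph{no} punctured neighbourhood of $w_0=(0,0)$. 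There is therefore no hypersurface $A$ with $W\setminus A\subset\phi(B\setminus\Sigma_\phi)$, and your bound on $\phi(B)$ cannot be upgraded to a bound on a full neighbourhood of $w_0$ by removable singularities. The difficulty you flag at the end (``preimages may escape toward $\partial\Omega_G$'') is not a technicality: in this example the points $(0,\ep)$ have no preimage at all, so no argument via compositions $\psi\circ\phi$ can ever control $\psi$ there.

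The paper avoids this obstruction by working on the \emph{target} side rather than trying to enlarge the image of $\phi$. It first shows, from local uniform boundedness, that $\Omega_G$ is Runge, and that the Fatou component $\Omega_i$ receiving $\phi_i(\Omega_G\setminus\Sigma)$ is again locally uniformly bounded and hence Runge. Then, for a critical point $p$, one chooses a one--dimensional analytic disc $\Delta_p$ meeting $\Sigma$ only at $p$; the image $\phi_i(\partial\Delta_p)$ is a compact subset of $\Omega_i$, and the maximum principle applied to $P\circ\phi_i$ on $\Delta_p$ shows $\phi_i(p)$ lies in its polynomial hull, hence in $\Omega_i$ by the Runge property. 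This disc--plus--polynomial--convexity argument is what replaces the branched--covering picture that is unavailable here. If you want to repair your approach, the missing ingredient is precisely this: you must prove and then exploit the polynomial convexity of the receiving Fatou component, rather than rely on local surjectivity of $\phi$.
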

		
		\begin{proof}
			
			Let $K \subset\subset \Omega_G$, i.e., $K$ is a relatively compact subset of $\Omega_G,$ then 
			
			\smallskip
			
			\no {\it Claim:-}  $\Omega_G$ is a Runge domain i.e., $\hat{K} \subset \Omega_G$ where 
			\[\hat{K}:=\{z \in \mbb C^k: |P(z)| \le \sup_{K}|P| ~\mbox{for every  polynomial}~ P\}.\]
			Let $K_{\delta}=\{z \in \mbb C^k: ~\mbox{dist}(z,K)\le \delta\}$. Choose $\delta >0$ such that $K_{\delta} \subset\subset \Omega_G.$ Now note that $\hat{K_\delta}\subset \subset \mbb C^k$ , $\hat{K_{\delta}}\supset \hat{K}$ and $G$ is uniformly bounded on $K_{\delta}.$ Pick $\phi \in G.$ Then there exists a polynomial endomorphism $P_{\phi}$ of $\mbb C^k$ such that 
			\begin{align*}
			|\phi(z)-P_{\phi}(z)| \le \ep ~\mbox{for every}~z \in \hat{K_{\delta}}\\
			\mbox{i.e.,} \hspace{1cm}|P_{\phi}(z)|-\ep \le |\phi(z)| \le |P_{\phi}(z)|+\ep .
			\end{align*}
			Hence
			\begin{align*}
			|\phi(z)| &\le |P_{\phi}(z)|+\ep \le \sup_{K_\delta}|P_{\phi}(z)|+\ep \\
			&\le \sup_{K_\delta}|{\phi}(z)|+2 \ep \le M+2\ep
			\end{align*}
			for every $z \in \hat{K_{\delta}}$ and some constant $M > 0.$ So $G$ is uniformly bounded on $\hat{K_{\delta}}$ and $\hat{K} \subset \Omega_G.$ 
			
			\smallskip
			
			\no Let $$\Sigma_i=\{z \in \mbb C^k: \det{\phi_i(z)}=0\}$$ for every $1 \le i \le n$ and $$\Sigma=\bigcup_{i=1}^n \Sigma_i.$$ Thus $\phi_i $ for every $i$, where $1 \le i \le n$ is an open map in $\Omega_G\setminus \Sigma$. Hence $\phi_i(\Omega_G\setminus\Sigma)$ is contained inside a Fatou component say $\Omega_i$ and $G$ is locally uniformly bounded on each of $\Omega_i$ for every $1 \le i \le n$ i.e., each $\Omega_i$ is a Runge domain. 
			
			\smallskip
			
			\no Now pick $p \in \Omega_G \cap \Sigma.$ Since $\Sigma$ is a set with empty interior, there exists a sufficiently small disc centered at $p$ say $\Delta_p$ such that $\overline{\De}_p\setminus \{p\} \subset \Omega_G \setminus \Sigma.$ Then $\phi_i(\overline{\De}_p\setminus \{p\}) \subset \Omega_i$ for every $1\le i \le n$ and since each $\Omega_i$ is Runge $\phi_i(p) \in \Omega_i$ i.e., $\phi_i(\Omega_G)$ is contained in the Fatou set for every $1 \le i \le n.$ Now for any $\phi \in G$ there exists a $m > 0$ such that
			\[\phi=\phi_{n_1}\circ \phi_{n_2}\circ\hdots\circ\phi_{n_m}\] where
			$1 \le n_j \le n$ for every $1 \le j \le m.$ Thus applying the above argument repeatedly for each $\phi_{n_j}(\tilde{\Omega}_j)$ where $G$ is locally uniformly bounded on $\tilde{\Omega}_j$ it follows that $\phi(\Omega_G)$ is contained in the Fatou set of $G$.	
		\end{proof}
		
		\begin{prop}
			If $G=\langle\phi_1,\phi_2,\hdots,\phi_n\rangle$ where each $\phi_i \in \mathcal{E}_k$  for every $1 \le i \le n$ and let $\Omega_G$ be a Fatou component of $G$. Then for any $\phi \in G$ there exists a Fatou component of $G$, say $\Omega_{\phi}$ such that $\phi(\Omega_G) \subset \bar{\Omega}_{\phi}$ and 
			\[\partial\Omega_G\subset \bigcup_{i=1}^n \phi_i^{-1}(\partial\Omega_{\phi_i}).\] 	
		\end{prop}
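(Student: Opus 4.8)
The plan is to treat the two assertions separately; the construction of $\Omega_\phi$ is essentially a repackaging of the ideas in Propositions~\ref{lem1} and~\ref{bound}, while the boundary inclusion is the real content.

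First I would record that $G\subset\mathcal{E}_k$: for a composition the Jacobian determinant factors as a product, and the image of a map of generic rank $k$ cannot be contained in the proper hypersurface cut out by the Jacobian of another such map, so the relevant factor does not vanish identically. Consequently $\Sigma_\phi=\{\det\phi=0\}$ is a proper analytic subset for every $\phi\in G$, and since a domain minus a proper analytic set (real codimension at least two) stays connected, $\Omega_G\setminus\Sigma_\phi$ is connected. By Proposition~\ref{lem1}(i) its image $\phi(\Omega_G\setminus\Sigma_\phi)$ is a connected subset of $F(G)$, hence lies in a single Fatou component, which I define to be $\Omega_\phi$. For $p\in\Omega_G\cap\Sigma_\phi$ I approximate $p$ by points of $\Omega_G\setminus\Sigma_\phi$ and use continuity of $\phi$ to obtain $\phi(p)\in\bar\Omega_\phi$; thus $\phi(\Omega_G)\subset\bar\Omega_\phi$.

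For the inclusion of $\partial\Omega_G$, fix $\zeta\in\partial\Omega_G$. Since $F(G)$ is open with open connected components, $\zeta\in J(G)$. The same limiting argument as above, applied with each generator $\phi_i$, shows $\phi_i(\zeta)\in\bar\Omega_{\phi_i}$, so for each $i$ either $\phi_i(\zeta)\in\partial\Omega_{\phi_i}$ (which is what we want for some $i$) or $\phi_i(\zeta)\in\Omega_{\phi_i}\subset F(G)$. I argue by contradiction, assuming the latter holds for \emph{every} $i$, and aim to conclude $\zeta\in F(G)$.

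The crux, and the step I expect to be the main obstacle, is transferring normality from the points $\phi_i(\zeta)$ back to $\zeta$ through the word structure of $G$. Given an arbitrary sequence $\psi_m\in G$, I peel off the first-applied generator and write $\psi_m=\psi_m'\circ\phi_{i(m)}$ with $\psi_m'\in G\cup\{\mathrm{id}\}$; by the pigeonhole principle some index $i$ recurs infinitely often, and after passing to that subsequence $\psi_m=\psi_m'\circ\phi_i$. Choosing a neighbourhood $U$ of $\zeta$ with $\phi_i(\bar U)$ a compact subset of $\Omega_{\phi_i}$ (possible since $\phi_i(\zeta)\in\Omega_{\phi_i}$ and $\phi_i$ is continuous), normality of $G$ on the Fatou component $\Omega_{\phi_i}$ furnishes a subsequence of $\{\psi_m'\}$ that converges uniformly, or diverges compactly, on $\phi_i(\bar U)$; composing with $\phi_i$ transfers this to $\{\psi_m\}$ on $U$. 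The two cases (uniform convergence to a holomorphic limit versus compact divergence to infinity) must both be tracked, and the term $\psi_m'=\mathrm{id}$ handled, but in every case $\{\psi_m\}$ is normal on $U$. Hence $U\subset F(G)$ and $\zeta\in F(G)$, contradicting $\zeta\in J(G)$. Therefore $\phi_i(\zeta)\in\partial\Omega_{\phi_i}$ for some $i$, i.e.\ $\zeta\in\bigcup_{i=1}^n\phi_i^{-1}(\partial\Omega_{\phi_i})$.
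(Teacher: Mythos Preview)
Your argument is essentially the paper's own proof: connectedness of $\Omega_G\setminus\Sigma_\phi$ plus Proposition~\ref{lem1}(i) for the first assertion, and for the boundary inclusion a contradiction argument that peels off the first generator via pigeonhole and invokes normality on $\Omega_{\phi_i}$. One quantifier needs tightening: you choose the neighbourhood $U$ \emph{after} fixing the sequence and the index $i$, so as written $U$ depends on the sequence and the conclusion ``$U\subset F(G)$'' does not follow; the paper avoids this by first fixing a single $V_p$ with $\bar V_p\subset\bigcap_{i=1}^n\phi_i^{-1}(V_{\phi_i})$ and only then taking an arbitrary sequence---make the same adjustment and your proof is complete.
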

		
		\begin{proof}
			
		Let $\phi \in G$ and let $\Sigma_{\phi}$ denote the set of points in $\mbb C^k$ where the Jacobian of $\phi$ vanishes. Since $\Omega_G \setminus \Sigma_{\phi}$ is connected it follows that $\phi(\Omega_G \setminus \Sigma_{\phi}) \subset \Omega_{\phi}$ where $\Omega_{\phi}$ is a Fatou component of $G$ and by continuity $\phi(\Omega_G) \subset \bar{\Omega}_{\phi}.$
		
		\smallskip
	
		\no Pick $p \in \partial \Omega_G$ such that $p \notin \partial\Omega_{\phi_i}$ for every $1 \le i \le n.$ Since $\phi_i(\Omega_G) \subset \bar{\Omega}_{\phi_i}$ , $\phi_i(p) \in \Omega_{\phi_i}$ for every $1 \le i \le n.$ So there exists  $V_{\phi_i}$ an open neighbourhood of $\phi_i(p)$ in $\Omega_{\phi_i}$ for every $i.$ Let $V_p$ be a neighbourhood of $p$ such that
		\[\bar{V}_p\subset\bigcap_{i=1}^n \phi_i^{-1}(V_{\phi_i}).\]
		 Let $\{\psi_n\}$ be a sequence in $G$ and without loss of generality it can be assumed that there exists a subsequence such that $\psi_n=f_n \circ \phi_1.$ Now $\phi_1(\bar{V}_p)$ is a compact subset in $\Omega_1$ and $f_n$ has a subsequence which either converges uniformly on $\phi_1(\bar{V}_p)$ or diverges to infinity. Thus $V_p$ is contained in the Fatou set of $G$ which is a contradiction!
		\end{proof}
		\no 
		The next observation is an extension of the fact that if $\phi \in \mathcal{P}_k$, then $F(\phi)=F(\phi^n)$ for every $n > 0$ for the case of semigroups.
		\begin{defn}
		Let $G$ be a semigroup generated by endomorphisms of $\mbb C^k.$ A sub semigroup $H$ of $G$ is said to have finite index if there is a finite collection of elements say $\psi_1,\psi_2,\hdots,\psi_
		{m-1} \in G $ such that
		\[G =\Big(\bigcup_{i=1}^{m-1} \psi_i \circ H\Big) \cup H.\]
		 The index of $H$ in $G$ is the smallest possible number $m.$
		\end{defn}
		\begin{defn}
		A sub semigroup $H$ of a semigroup $G$ of endomorphisms of $\mbb C^k$ is of co--finite index if there is a finite collection of elements say $\psi_1,\psi_2,\hdots,\psi_{m-1} \in G $ such that either \[\psi \circ \psi_{j} \in H \; \mbox{or}\; \psi \in H\]
		for every $\psi \in G$ and for some $1 \le j \le m-1.$ 
		The index of $H$ in $G$ is the smallest possible number $m.$
		\end{defn}
		\begin{prop}\label{index}
		Let $G$ be a semigroup generated by proper holomorphic endomorphisms of $\mbb C^k$ and $H$ be a sub semigroup of $G$ which has a finite (or co--finite) index in $G.$ Then $F(G)=F(H)$ and $J(G)=J(H).$
		\end{prop}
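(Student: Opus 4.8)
The plan is to establish both equalities at once by proving $F(G)=F(H)$; the Julia set statement then follows by taking complements in $\mbb C^k$. One inclusion is immediate and uses nothing about the index: since $H\subseteq G$, every sequence drawn from $H$ is also a sequence in $G$, so normality of the family $G$ on an open set forces normality of $H$ there. Hence $F(G)\subseteq F(H)$, equivalently $J(H)\subseteq J(G)$. Everything therefore reduces to the reverse inclusion $F(H)\subseteq F(G)$, that is, to showing that $G$ is normal at each point of $F(H)$. To this end I would fix $z_0\in F(H)$ and an arbitrary sequence $\{\phi_j\}\subseteq G$, and produce a subsequence converging or diverging locally uniformly near $z_0$.

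Consider first the finite index case. Each $\phi_j$ either lies in $H$ or has the form $\phi_j=\psi_i\circ h_j$ with $h_j\in H$ and $1\le i\le m-1$. Since there are only $m$ possibilities, the pigeonhole principle lets me pass to a subsequence lying in a single class. If the class is $H$ itself, normality of $H$ at $z_0$ finishes the job. Otherwise $i$ is fixed and $\phi_j=\psi_i\circ h_j$; because $z_0\in F(H)$, after a further extraction either $h_j\to h$ locally uniformly near $z_0$ or $h_j\to\infty$ locally uniformly near $z_0$. Post-composing with the fixed map $\psi_i$ gives $\phi_j\to\psi_i\circ h$ by continuity in the first case, and $\phi_j\to\infty$ in the second case precisely because $\psi_i$ is proper, so that $|\psi_i(w)|\to\infty$ as $|w|\to\infty$. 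Thus $\{\phi_j\}$ is normal near $z_0$. Note that properness of the generators is used exactly to propagate the divergent alternative through $\psi_i$.

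The co--finite case runs along the same lines, but now the coset map appears on the right and a genuine difficulty emerges. The same pigeonhole step reduces matters to a subsequence with $g_j:=\phi_j\circ\psi_i\in H$ for a fixed $i$. The natural tool is a transfer principle: for a proper (hence surjective, open, and finite branched covering) map $\psi_i$ and any open $V$, the family $\{\phi_j\}$ is normal on $V$ if and only if $\{\phi_j\circ\psi_i\}$ is normal on $\psi_i^{-1}(V)$. The nontrivial direction uses that $\psi_i^{-1}(K)$ is compact for compact $K$ (properness) and that $\psi_i$ carries $\psi_i^{-1}(K)$ onto $K$ (surjectivity), together with the fact that any locally uniform limit of $g_j$ is constant along the fibres of $\psi_i$ and so descends to a well-defined limit of $\phi_j$. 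Applying this with $V$ a small ball about $z_0$, normality of $\{\phi_j\}$ near $z_0$ becomes equivalent to normality of the family $\{g_j\}\subseteq H$ on $\psi_i^{-1}(V)$.

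The hard part will be the last link: one needs $\psi_i^{-1}(z_0)\subseteq F(H)$ whenever $z_0\in F(H)$, i.e. the backward invariance $\psi_i^{-1}(F(H))\subseteq F(H)$ under the coset maps $\psi_i$. This is exactly the \emph{complete} invariance of the Fatou set for a semigroup of proper endomorphisms. Proposition \ref{lem1} already supplies forward invariance of $F(H)$ (equivalently backward invariance of $J(H)$) under elements of $H$, and the complementary direction, forward invariance of $J(H)$, is expected to follow from the branched--covering and surjectivity properties of proper maps as in the classical one--variable theory. The delicate feature to handle with care is that $\psi_i$ need only belong to $G$, not to $H$, so this invariance cannot be read off directly from Proposition \ref{lem1}; establishing it for every $\psi_i$ is the crux of the co--finite case. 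Once it is in hand, the transfer principle yields normality of $\{\phi_j\}$ at $z_0$, giving $F(H)\subseteq F(G)$ and completing the proof.
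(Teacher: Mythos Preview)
Your treatment of the finite index case is exactly the paper's argument: pigeonhole into a single coset $\psi_i\circ H$, extract a subsequence of $\{h_j\}$ using normality of $H$ on $F(H)$, and push the limit or the divergence through the fixed proper map $\psi_i$.

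Where you diverge from the paper is in the co--finite case. The paper's proof in fact addresses \emph{only} the finite index hypothesis; it simply writes ``Since $H$ has a finite index in $G$'' and never returns to the co--finite alternative. So on that half of the statement you have actually done more than the paper: you set up the correct transfer principle (normality of $\{\phi_j\}$ on $V$ is equivalent to normality of $\{\phi_j\circ\psi_i\}$ on $\psi_i^{-1}(V)$, using properness and surjectivity of $\psi_i$), and you isolate the one missing ingredient, namely $\psi_i^{-1}\bigl(F(H)\bigr)\subseteq F(H)$ for the coset representatives $\psi_i\in G$.

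That said, the gap you flag is genuine and you have not closed it. Proposition~\ref{lem1} gives forward invariance of $F(H)$ under elements of $H$, but the $\psi_i$ need only lie in $G$, and there is no a priori relation between $H$ acting near a preimage $w_0\in\psi_i^{-1}(z_0)$ and $H$ acting near $z_0$; the semigroup structure only tells you $h\circ\psi_i\in G$, not that it lies in $H$. Your sketch therefore stops short of a proof of the co--finite case, and the paper offers no help here since it omits this case entirely. If you want to complete the argument you will need either an additional hypothesis linking $\psi_i$ to $H$ or an independent argument for backward invariance of $F(H)$ under each $\psi_i$.
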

		\begin{proof}
		From the definition itself it follows that $F(G) \subset F(H).$ To prove the other inclusion, pick any sequence $\{\phi_n \}\in G$. Since  $H$ has a finite index  in $G$, there exists $\psi_i$, $1 \le i \le m-1$ such that 
			\[G =\Big(\bigcup_{i=1}^{m-1} \psi_i \circ H\Big) \cup H.\]
			So without loss of generality one can assume that there exists a subsequence say $\phi_{n_k}$ with the property \[\phi_{n_k}=\psi_1 \circ h_{n_k}\] where $\{h_{n_k}\}$ is a sequence in $H.$ Now on $F(H)$, the sequence $\{h_{n_k}\}$ has a convergent subsequence. Hence, so do $\{\phi_{n_k}\}$ and $\{\phi_n\}$ as $\psi_1$ is a proper map in $\mbb C^k.$
		\end{proof}
		\no 
		Let $G$ be a semigroup $$G=\langle \phi_1,\phi_2,\hdots,\phi_m\rangle$$ where $\phi_i \in \mathcal{P}_k$, for every $1 \le i \le m$ and each of these $\phi_i$'s commute with each other, i.e., $\phi_i \circ \phi_j=\phi_j \circ \phi_i$ for $i \neq j.$ Let $H$ be a sub semigroup of $G$ defined as
		\[H=\langle \phi_1^{l_1},\phi_2^{l_2},\hdots ,\phi_m^{l_m}\rangle\]
		where $l_i>0$ for every $1 \le i \le m.$ Then $H$ has a finite index in $G$ and hence by Proposition \ref{index} $F(G)=F(H).$
		\begin{cor}
		Let $\phi_i$ be elements in $\mathcal{P}_k$ for $1 \le i \le m$, $l=(l_1,l_2,\hdots,l_m)$ a $m-$tuple of positive integers and $G_l=\langle\phi_1^{l_1},\phi_2^{l_2},\hdots, \phi_m^{l_m}\rangle.$ Then $F(G_l)$ and $J(G_l)$ are independent of the $m-$tuple $l$, if $\phi_i \circ \phi_j=\phi_j\circ\phi_i$ for every $1 \le i,j\le m,$ i.e., given two $m-$tuples $p$ and $q$, $F(G_p)=F(G_q).$
		\end{cor}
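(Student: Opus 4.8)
The plan is to compare every $G_l$ with the single ``full'' semigroup
\[
G=\langle \phi_1,\phi_2,\hdots,\phi_m\rangle,
\]
and to show that each $G_l$ sits inside $G$ as a subsemigroup of finite index, so that Proposition \ref{index} forces $F(G_l)=F(G)$ and $J(G_l)=J(G)$ for \emph{every} choice of $l$. Once this is in hand the conclusion is immediate: the common value $F(G)$ (respectively $J(G)$) does not involve the tuple $l$ at all, so for any two $m$-tuples $p$ and $q$ one gets $F(G_p)=F(G)=F(G_q)$ and likewise for the Julia sets, which is exactly the asserted independence.

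First I would record that $G_l\subset G$ for every $l$, since each generator $\phi_i^{l_i}$ of $G_l$ is itself a word in $G$. The heart of the matter is the finite-index claim, and this is precisely where commutativity enters. Because the $\phi_i$ commute, any element of $G$ may be rewritten in the normal form $\phi_1^{a_1}\circ\phi_2^{a_2}\circ\hdots\circ\phi_m^{a_m}$ with $a_i\ge 0$ and $\sum_i a_i\ge 1$. Dividing each exponent by the corresponding $l_i$, write $a_i=q_il_i+r_i$ with $0\le r_i<l_i$; then
\[
\phi_1^{a_1}\circ\hdots\circ\phi_m^{a_m}
=\Big(\phi_1^{r_1}\circ\hdots\circ\phi_m^{r_m}\Big)\circ\Big((\phi_1^{l_1})^{q_1}\circ\hdots\circ(\phi_m^{l_m})^{q_m}\Big),
\]
where the second factor lies in $G_l$ and the first factor ranges over the finite set of ``remainder words'' $\phi_1^{r_1}\circ\hdots\circ\phi_m^{r_m}$ indexed by the residues $0\le r_i<l_i$. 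Taking these finitely many words as coset representatives exhibits $G$ as a finite union of the form $\big(\bigcup\psi\circ G_l\big)\cup G_l$, so $G_l$ has finite index in $G$; this is exactly the assertion made in the discussion immediately preceding the statement. Proposition \ref{index} then yields $F(G_l)=F(G)$ and $J(G_l)=J(G)$, and the corollary follows as indicated above.

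The step I expect to be the main obstacle is the verification of the finite-index property, and more precisely the bookkeeping needed to turn the exponent reduction into the decomposition demanded by the definition. The genuinely essential point is that commutativity collapses the otherwise free word structure of $G$ down to the finite quotient coming from the residues $(r_1,\hdots,r_m)$ modulo $(l_1,\hdots,l_m)$; without it there would be no finite family of coset representatives. Two minor technical wrinkles must be handled with care: the semigroups here contain no identity, so the degenerate ``all $q_i=0$'' case must be treated separately (such elements range over the finite set of remainder words, hence a subsequence of them is constant and therefore trivially normal), and the normal form need not be unique if some generators coincide or satisfy relations. Neither affects the normality transfer in Proposition \ref{index}, whose proof only requires that an arbitrary sequence in $G$ admit a subsequence of the shape $\psi\circ h_n$ with $\psi$ a fixed proper map and $h_n\in G_l$; composing on the left by the fixed proper $\psi$ preserves both locally uniform convergence and uniform divergence to infinity on $F(G_l)$.
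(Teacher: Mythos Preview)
Your argument is correct and follows exactly the paper's approach: compare each $G_l$ with the full semigroup $G=\langle\phi_1,\hdots,\phi_m\rangle$, use commutativity to show $G_l$ has finite index in $G$, and invoke Proposition~\ref{index}. You have in fact supplied more detail than the paper (which simply asserts the finite-index property in the paragraph preceding the corollary), and your handling of the ``all $q_i=0$'' degeneracy is a nice touch that the paper glosses over.
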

		\begin{proof}
		Since $G_l$ has a finite index in $G$ for every $m-$tuple $l=(l_1,l_2,\hdots,l_m)$, it follows that $F(G_l)=F(G)$ and $J(G_l)=J(G).$
		\end{proof}
		\begin{ex}
	    Let $G=\langle f,g \rangle$ where $f(z_1,z_2)=(z_1^2,z_2^2)$ and $g(z_1,z_2)=(z_1^2/a,z_2^2)$ where $a \in \mbb C$ such that $|a|>1.$ Then it is easy to check that \[J(f)=\big\{|z_1|=1\big\} \times\big \{|z_2| \le 1 \big \} \cup \big\{|z_1|\le 1\big\}\times \big\{|z_2| =1\big\}\] and
	    \[J(g)=\big\{|z_1|=|a|\big\} \times \big\{|z_2| \le 1\big\} \cup \big\{|z_1|\le |a|\big\}\times \big\{|z_2| =1\big\}.\]
	    Now consider the bidisc $\{|z_1 | < 1, |z_2 | < 1\}.$ Clearly this domain is forward invariant under
	    both $f$ and $g$. This shows that $\{|z_1 | < 1, |z_2 | < 1\} \subset F(G)$. Similarly observe that
	    $$\{|z_2 | > 1\} \cup \{|z_1 | > |a|\} \subset F(G).$$ We claim that
	    \[\big\{ 1 \le |z_1| \le |a|\big\} \times \big\{ |z_2| \le 1 \big\} \subset J(G). \]
	    Note that $\{|z_1|=|a|,|z_2| \le 1\}$ is contained inside $J(G)$ and since $J(G)$ is backward invariant it follows that \[\{|z_1|=|a|^{1/2},|z_2| \le 1\} \subset f^{-1}(\{|z_1|=|a|,|z_2| \le 1\}) \subset J(G).\] So inductively we get that \[\{|z_1|=|a|^t,|z_2| \le 1\} \subset J(G)\] for any $t=k2^{-n}$ where $1 \le k \le 2^n$ and $n \ge 1.$ As $\{k2^{-n}: 1 \le k \le 2^n,\; n \ge 1\}$ is dense in $[0,1]$, it follows that $\{ 1 \le |z_1| \le |a|\} \times \{ |z_2| \le 1 \} \subset J(G).$ Thus the Julia set of the semigroup $G$ is not forward invariant and clearly from the above observations one can prove that 
	    \[J(G)=\big\{ |z_1| \le 1 \big\} \times \big\{ |z_2| = 1 \big\}\cup\big\{ 1 \le |z_1| \le |a|\big\} \times \big\{ |z_2| \le 1 \big\}.\]
	    \end{ex}
	    
		\begin{ex}
		Let $T_0 (z) = 1,\; T_1 (z) = z$ and $T_{n+1}(z) = 2z T_n (z)-T_{n-1}(z)$ for
		$n \ge 1$ and $G =\langle f_0 , f_1 , f_2 , \hdots  \rangle$, with $f_i (z_1 , z_2 ) = (T_i (z_1 ), z_2^2 )$ for $i \ge 0.$ Consider
		\[
		G_1 =\langle T_0 (z_1 ), T_1 (z_1 ), T_2 (z_1 ), ... \rangle,\; G_2 =\langle z_2^2 \rangle.\]
		Since any sequence in $G_1$ is uniformly unbounded on the complement of $[-1,1]$ it follows that 
		\[J(G) = [-1, 1] \times\{|z_2 | \le 1\}.\]
		Also as $J(G_1)\subset \mbb C$ is completely invariant so is $J(G).$ 
		\end{ex}
		\section{Isolated points in the Julia set of a semigroup G.}
		
		\begin{prop}\label{prop1} 
		Let $G=\langle\phi_1, \phi_2, \hdots \rangle$ where each $\phi_i \in \cal E_k$. If the Julia set $J(G)$ contains an isolated point (say $a$) then there exists a neighbourhood $\Omega_a$ of $a$ such that $\Omega_a\setminus\{a\} \subset F(G)$ and $\psi \in G$ which satisfies  $\Omega_a \subset \subset \psi(\Omega_a).$
		In particular, if $G$ is  a semigroup generated by proper maps, then $\psi^{-1}(a)=a$.
		\end{prop}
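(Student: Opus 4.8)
The plan is to exploit the single-point Julia set near $a$ together with the openness of the maps in $\mathcal E_k$ to manufacture a genuine expansion. First I would record that, since $a$ is an isolated point of $J(G)$, there is a small ball $\Omega_a$ centered at $a$ with $\Omega_a \cap J(G) = \{a\}$, so that $\Omega_a \setminus \{a\} \subset F(G)$; this is immediate from the definition of isolated point.

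Next I would argue that no sequence in $G$ can be normal on all of $\Omega_a$, for otherwise $a$ itself would lie in $F(G)$, contradicting $a \in J(G)$. Thus there is a sequence $\{\psi_n\}\subset G$ and a point arbitrarily close to $a$ where normality fails in the full ball $\Omega_a$, while on the punctured neighbourhood $\Omega_a \setminus \{a\}$ every sequence in $G$ has a locally-uniformly convergent-or-divergent subsequence. The key local principle I would invoke is a removable-singularity / Hartogs-type phenomenon: a sequence of holomorphic maps that is bounded and convergent on a punctured ball $\Omega_a\setminus\{a\}$ in dimension $k \ge 2$ automatically extends and converges across the isolated point $a$ (since $\{a\}$ has codimension $k\ge 2$, bounded holomorphic functions extend by Hartogs). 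Consequently the only way normality can fail at $a$ is through unboundedness. I would therefore conclude that some sequence $\psi_n$ in $G$ must have image escaping to infinity near the puncture in a controlled way, which after passing to a subsequence produces a single element $\psi \in G$ that strictly expands $\Omega_a$, i.e. $\Omega_a \subset\subset \psi(\Omega_a)$. The natural mechanism is that the failure of normality forces the values $\psi_n(z)$ on $\Omega_a\setminus\{a\}$ to spread out, and openness of $\psi_n$ on $\Omega_a\setminus \Sigma_{\psi_n}$ (Proposition \ref{lem1}) lets one transfer this spreading into a containment of the form $\Omega_a \subset\subset \psi(\Omega_a)$ for a suitable single $\psi=\psi_n$ in the semigroup.

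For the final assertion, suppose $G$ is generated by proper maps, so each generator — and hence $\psi$ — is proper, and by Proposition \ref{lem1}(ii) the Julia set is backward invariant under the range of $\psi$. From $\Omega_a \subset\subset \psi(\Omega_a)$ together with $\Omega_a \cap J(G) = \{a\}$, I would track the preimages of $a$: since $a \in J(G)$ and $J(G)$ is backward invariant, every point of $\psi^{-1}(a)\cap\Omega_a$ lies in $J(G)\cap \Omega_a = \{a\}$, forcing $\psi^{-1}(a)\cap \Omega_a = \{a\}$; the strict containment $\Omega_a \subset\subset \psi(\Omega_a)$ guarantees $a$ has a preimage under $\psi$ inside $\Omega_a$, and properness rules out preimages escaping to the boundary, yielding $\psi^{-1}(a) = a$.

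The main obstacle I expect is the passage from ``the family fails to be normal at $a$ but is normal on the punctured neighbourhood'' to the clean expansion $\Omega_a \subset\subset \psi(\Omega_a)$ realized by a \emph{single} element of $G$. The delicate point is controlling the geometry of the images $\psi_n(\Omega_a)$ uniformly enough — ruling out that the maps merely oscillate or that the images degenerate along $\Sigma_{\psi_n}$ — so that one genuine strict inclusion can be extracted rather than only an asymptotic statement about the sequence. The Hartogs extension across the codimension-$k$ point $a$ is what makes dimension $k\ge 2$ essential here, and I would lean on it to force the unboundedness to be the sole source of non-normality.
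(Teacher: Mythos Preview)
Your setup is sound and matches the paper's: take a ball $\Omega_a = B(a,\epsilon)$ with $\Omega_a \setminus \{a\} \subset F(G)$, observe that some sequence in $G$ cannot be normal on the full ball, and use removability of the isolated point to deduce that the only obstruction to normality is unboundedness. (The paper does this last step with the maximum modulus principle rather than Hartogs extension --- if a subsequence is bounded on the shell $\{\epsilon/2 \le |z-a| \le \epsilon\}$, it is bounded on all of $B(a,\epsilon)$ --- which is simpler and works for each coordinate function directly.)

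The genuine gap is exactly the step you flag as the ``main obstacle'': you have not supplied any mechanism that turns unboundedness of a sequence into the strict containment $\Omega_a \subset\subset \psi(\Omega_a)$ for a single $\psi$. Your description (``escaping to infinity near the puncture,'' ``openness lets one transfer this spreading'') is both vague and geometrically inverted. The paper's argument is quite concrete and goes the other way around: one produces a sequence $\phi_n \in G$ that diverges to infinity on the \emph{boundary sphere} $\partial B(a,\epsilon)$ but does \emph{not} diverge on all of $B(a,\epsilon)$ (else $a$ would be Fatou). This yields interior points $x_n$ with $\phi_n(x_n)$ bounded; one shows $x_n \to a$ (again by maximum modulus, since a limit $p\ne a$ would lie in the shell and force convergence there). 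Now the image $\phi_n(B(a,\epsilon))$ contains a point near the bounded limit $q=\lim \phi_n(x_n)$, while $\phi_n(\partial B(a,\epsilon))$ lies outside any fixed large ball $B(0,|q|+R)$ for $n$ large. A connectedness/boundary argument then forces $B(0,|q|+R) \subset \phi_n(B(a,\epsilon))$: if not, the boundary of $\phi_n(B(a,\epsilon))$ would meet $B(0,|q|+R)$, producing $y_n\in \partial B(a,\epsilon)$ with $|\phi_n(y_n)|<|q|+R$, contradicting divergence on the sphere. Taking $R$ large enough gives $B(a,\epsilon) \subset\subset \phi_n(B(a,\epsilon))$, and one sets $\psi = \phi_n$.

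Your treatment of the proper-map conclusion $\psi^{-1}(a)=a$ is correct and essentially the paper's argument: backward invariance of $J(G)$ (Proposition~\ref{lem1}(ii)) forces any preimage of $a$ inside $\Omega_a$ to equal $a$, and the strict containment guarantees such a preimage exists.
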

		\begin{proof}
		Assume $a=0$ is an isolated point in the Julia set $J(G).$ Then there exists a sufficiently small ball $B(0,\ep)$ around $0$ such that $B(0,\ep)\setminus \{0\}$ is contained $F(G).$ Let $$A:=\{z: \ep/2\le|z|\le\ep\}.$$ Then $A \subset F(G).$ 
		
		\smallskip
		
		\no{\it Claim:} There exists a sequence $\phi_n \in G$ such that $\phi_n$ diverges to infinity on $A.$ 
		
		\smallskip
		
		\no Suppose not. Then for every sequence $\{\phi_n\} \in G$, there exists a subsequence  $\{\phi_{n_k}\}$ which converges to a finite limit in $A.$ By the maximum modulus principle
		\[\|{\phi_{n_k}}\|_{B(0,\ep)}<M.\] 
		By the Arzel\'{a}--Ascoli Theorem it follows that $\phi_{n_k}$ is equicontinuous on $B(0,\ep)$, which contradicts that  $0 \in J(G).$ 
		
		\smallskip
		
		\no By the same reasoning as above there exists a sequence $\{\phi_n\} \in G$ such that it diverges uniformly to infinity on $A$ but does not diverge uniformly to infinity on $B(0,\ep)$, since it would again imply that $B(0,\ep)$ is contained in the Fatou set of $G.$  Thus there exists a sequence of points $x_n$ in $B(0,\ep)$ such that $\phi_n(x_n)$ is bounded  i.e.,$$|\phi_n(x_n)|< M$$ for some large $M>0.$ So we can choose a subsequence of this $\{\phi_n\}$ and relabel it as $\{\phi_n\}$ again such that it satisfies the following condition:
		\[\phi_n(x_n) \rightarrow q ~\mbox{and}~ x_n \rightarrow p \]
		where $p \in \overline{B(0,\ep)}.$ 
		
		\smallskip 
		
		\no {\it Claim:} $p=0$. 
		
		\smallskip
		
		\no Suppose not. Then $\phi_n(p)$ is bounded. Let $\widetilde{A}=\{z: \min(|p|, \ep/2)\le |z| \le \ep \}.$ Then $\widetilde{A} \supseteq A.$ Now $\phi_{n_k}(p)$ converges on $\widetilde{A}$, then $\phi_{n_k}$ on $\widetilde{A}$ converges to a finite limit, and hence on $A$ by the maximum modulus principle. This is a contradiction! 
		
		\smallskip
		 
		\no Since ${\phi_{n}|}_{\partial B(0,\ep)} \rightarrow \infty$ for large $n$ $$\|{\phi_n} \|_{\partial B(0,\ep)} \gg |q|.$$
		
			\no Thus for a sufficiently large $R>0 $ and $n$   $$B(0,|q|+R)\cap \phi_n(B(0,\ep))\neq \emptyset.$$ Now, if $B(0,\ep) \nsubseteq \phi_n(B(0,\ep))$, then $B(0,|q|+R)\nsubseteq \phi_n(B(0,\ep))$ since $B(0,\ep) \subset B(0,|q|+R) $ for large $R>0.$ Then there exists $y_n \in \partial B(0,\ep)$ such that $|\phi_n(y_n)| <|q|+R$, which is not possible. Hence $B(0,\ep) \subset \subset \phi_n(B(0,\ep))$ for sufficiently large $n.$ Relabel this $\phi_n$ as $\psi$ and consider the neighbourhood $\Omega_0$ as $B(0,\ep).$
		
		\smallskip
		
		\no Since $0 \in B(0,\ep)\subset \psi(B(0,\ep))$, there exists $\alpha \in B(0,\ep)$ such that $\psi(\alpha)=0.$ From Proposition \ref{lem1}  it follows that $\alpha=0.$ 
		\end{proof}
		\begin{thm}\label{iso}
		 Let $G=\langle\phi_1, \phi_2, \hdots \rangle$ where each $\phi_i \in \cal I_k.$ If the Julia set $J(G)$ contains an isolated point, say $a$ then there exists an element $\psi \in G$ such that $\psi$ is conjugate to an upper triangular automorphism.
		\end{thm}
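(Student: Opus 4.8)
\section*{Proof proposal}

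The plan is to reduce the statement to a local normal-form computation at a repelling fixed point. First I would invoke Proposition~\ref{prop1}: since every element of $\mathcal I_k$ is injective, it is a biholomorphism onto its (open) image and therefore has nowhere-vanishing Jacobian, so that $\Sigma_\psi=\emptyset$ for each $\psi\in G$; in particular $\mathcal I_k\subset\mathcal E_k$, so Proposition~\ref{prop1} applies. Applying it to the isolated point $a$ produces $\psi\in G$ and a neighbourhood $\Omega_a$ with $\Omega_a\setminus\{a\}\subset F(G)$ and $\Omega_a\subset\subset\psi(\Omega_a)$. The point $\alpha\in\Omega_a$ with $\psi(\alpha)=a$ cannot lie in $\Omega_a\setminus\{a\}$: otherwise $\alpha\in F(G)\setminus\Sigma_\psi$ and Proposition~\ref{lem1}(i) would force $a=\psi(\alpha)\in F(G)$, contradicting $a\in J(G)$. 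Hence $\psi(a)=a$, and after a translation I may assume $a=0$ and $\Omega_0=B(0,\epsilon)$.

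The heart of the argument is to show that $0$ is a repelling fixed point, i.e.\ that every eigenvalue $\lambda_j$ of $D\psi(0)$ satisfies $|\lambda_j|>1$. Because $\psi$ is injective with $\overline{\Omega_0}\subset\psi(\Omega_0)$, the branch $h:=\psi^{-1}$ is a well-defined holomorphic self-map of $\Omega_0$ fixing $0$, and the relative compactness $\Omega_0\subset\subset\psi(\Omega_0)$ translates into $h(\overline{\Omega_0})\subset\subset\Omega_0$. By the Earle--Hamilton fixed point theorem (equivalently, by the strict decrease of the Kobayashi metric of $\Omega_0$ under a holomorphic self-map whose image is relatively compact), $h$ is a strict contraction of the Kobayashi distance, with $0$ as its unique fixed point. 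Passing to the infinitesimal Kobayashi metric at $0$, which on the ball $\Omega_0$ is proportional to the Euclidean norm, shows that the spectral radius of $Dh(0)=\big(D\psi(0)\big)^{-1}$ is strictly less than $1$, whence $|\lambda_j|>1$ for every $j$. This spectral step is where the relative-compactness hypothesis is genuinely used, and it is the main obstacle; everything preceding it is bookkeeping and everything following it is a citation.

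With all eigenvalues of modulus greater than $1$, the fixed point lies in the Poincar\'e domain, so the Poincar\'e--Dulac normal form theorem applies: $\psi$ is holomorphically conjugate, in a neighbourhood of $0$, to a polynomial map $N$ in which only resonant monomials survive. To read off the triangular shape I would order the coordinates by decreasing modulus, $|\lambda_1|\ge\cdots\ge|\lambda_k|$, and observe that a resonance $\lambda_i=\prod_j\lambda_j^{m_j}$ with $\sum_j m_j\ge 2$ and all $|\lambda_j|>1$ can only involve variables $w_j$ with $|\lambda_j|<|\lambda_i|$, hence with index $j>i$. Thus the $i$-th component of $N$ is $\lambda_i w_i$ plus a polynomial in $w_{i+1},\dots,w_k$, which is exactly an upper triangular polynomial automorphism. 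This exhibits $\psi$ as conjugate to an upper triangular automorphism, completing the proof.
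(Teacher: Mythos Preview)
Your argument correctly reaches the point where $0$ is a repelling fixed point of $\psi$; the Earle--Hamilton route is essentially equivalent to the paper's device of taking $\alpha\psi^{-1}$ with $\alpha>1$ as a self-map of the bounded domain $\psi(B(0,\ep))$ and applying the Carath\'eodory--Cartan--Kaup--Wu theorem. The gap is in your final step. Poincar\'e--Dulac yields only a \emph{local} conjugacy: a biholomorphic germ near $0$ intertwining $\psi$ with a triangular polynomial map. The theorem, however, asserts conjugacy \emph{inside} ${\rm Aut}(\mbb C^k)$, which is a global statement. Your argument never shows that $\psi$ is surjective, and an injective holomorphic endomorphism of $\mbb C^k$ with a repelling fixed point can perfectly well be a Fatou--Bieberbach map onto a proper subdomain, so surjectivity is genuinely at issue and the local normal form does not decide it.

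The paper closes this gap as follows. Since $B(0,\ep)\setminus\{0\}\subset F(G)\subset F(\psi)$ while $0\in J(\psi)$ (being repelling), the argument of Proposition~\ref{prop1}, now run for the single map $\psi$, produces a subsequence $\{\psi^{n_k}\}$ diverging to infinity uniformly on $\partial B(0,\ep)$. Hence for every $R>0$ one eventually has $B(0,R)\subset\psi^{n_k}(B(0,\ep))$, which forces $\bigcup_n\psi^n(B(0,\ep))=\mbb C^k$. In particular $\psi$ is surjective, so $\psi\in{\rm Aut}(\mbb C^k)$, and the attracting basin of $\psi^{-1}$ at $0$ is all of $\mbb C^k$. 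The global triangular conjugacy is then obtained by invoking the Rosay--Rudin theorem, not Poincar\'e--Dulac.
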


\begin{proof}
Without loss of generality we can assume that $a=0.$ Now by Proposition \ref{prop1} it follows that there exists a sufficiently small ball $B(0,\ep)$ around $0$ and an element $\psi \in G$ such that $B(0,\ep) \subset \subset \psi(B(0,\ep)).$ Since $\psi$ is injective map in $\mbb C^k$, $\psi(B(0,\ep))$ is biholomorphic to $B(0,\ep)$ and hence we can consider the inverse i.e.,\[\psi^{-1}:\psi(B(0,\ep)) \to B(0,\ep).\] Note that $\psi(B(0,\ep))$ is bounded and $B(0,\ep)$ is compactly contained in $\psi(B(0,\ep)).$ Therefore there exists an $\alpha>1$ such that the map defined by
\[ \psi_{\alpha}=\alpha \psi^{-1}(z)\] 
is a self map of the bounded domain $\psi(B(0,\ep))$ with a fixed point at $0.$ Then by the Carath\'{e}odory--Cartan--Kaup--Wu Theorem (See Theorem 11.3.1 in \cite{Krantz}) it follows that all the eigenvalues of $\psi_\alpha$ are contained in the unit disc. Hence $0$ is a repelling fixed point for $\psi$ and also is an isolated point in the Julia set of $\psi.$ 

\smallskip

\no Since $B(0,\ep)\setminus \{0\} \in J(G)$, $B(0,\ep)\setminus \{0\}$ is also contained in the Fatou set of $\psi$ and using the same argument as in the Proposition \ref{prop1} there exists a subsequence (say $n_k$) such that 
\[\|\psi^{n_k}\|_{\partial B(0,\ep)} \to \infty \]
uniformly. Thus for any given $R>0$ there exists $k_0$ large enough such that $B(0,R) \subset \psi^{n_{k_0}}(B(0,\ep)).$ Hence $\psi$ is an automorphism of $\mbb C^k$ and the basin of attraction of $\psi^{-1}$ at $0$ is all of $\mbb C^k.$ Now by the result of Rosay--Rudin (\cite{RR1}) $\psi$ is conjugate to an upper triangular map.
\end{proof}
		\begin{rem}
		The proof here shows that there exists a sequence $\phi_n \in G$ such that each $\phi_n$ is conjugate to an upper triangular map.
		\end{rem}
		\no
		Recall that a domain $\omega$ is holomorphically homotopic to a point in a domain $\Omega$ if there exists a continuous map $h:[0,1]\times \bar{\omega}\to \Omega$ with $h(1,z)=z$ and $h(0,z)=p$ where $p \in \omega$ and $h(t,\centerdot)$ is holomorphic in $\omega$ for every $t \in [0,1].$
		\begin{prop}\label{prop2}
		Let $\phi$ be a non-constant endomorphism of $\mbb C^k$ such that on a bounded domain $U \subset F(\phi)$, the map $\phi$ is proper onto its image, $U \subset \subset \phi(U)$ and $U$ is holomorphically homotopic to a point in $\phi(U)$ then
		\begin{enumerate}
		\item[(i)] $\phi$ has a fixed point, say $p$ in $U$.
		\item[(ii)] $\phi$ is invertible at its fixed points.
		\item[(iii)]The backward orbit of $\phi$ at the fixed point in $U$ is finite i.e, $O^- (p) \cap U$ is finite where
		\[O^-_{\phi} (p)=\{ z \in \mbb C^k: \phi^n(z)=p, n \ge 1\}.\]
		\end{enumerate}
		\end{prop}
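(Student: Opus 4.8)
The plan is to realise a global single-valued branch of $\phi^{-1}$ on $\bar U$ and then extract the fixed point from a contraction principle. First I would record the structural fact that, since $\phi$ is proper onto its image, the restriction $\phi\colon U\to V:=\phi(U)$ is a proper holomorphic map between equidimensional domains, hence a branched covering of some finite degree, with $U\subset\subset V$. The hypothesis that $U$ is holomorphically homotopic to a point in $V$ is exactly what I would use to lift the inclusion $\bar U\hookrightarrow V$ through $\phi$: starting from the constant end of the null-homotopy $h$ (which lifts to a single preimage in $U$) and lifting $h(t,\cdot)$ fibrewise, the holomorphic homotopy trivialises the monodromy of the covering away from the branch locus and produces a holomorphic section $g\colon\bar U\to U$ with $\phi\circ g=\mathrm{id}_{\bar U}$. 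Being continuous on the compact set $\bar U$ and valued in the open set $U$, the section satisfies $g(\bar U)\subset\subset U$; the Earle--Hamilton fixed point theorem applied to $g\colon U\to U$ then yields a unique $p\in U$ with $g(p)=p$, whence $\phi(p)=\phi(g(p))=p$. This proves (i).

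For (ii), I would exploit that a section is automatically injective: $g(z_1)=g(z_2)$ forces $z_1=\phi(g(z_1))=\phi(g(z_2))=z_2$. An injective holomorphic map of a domain in $\mbb C^k$ has nowhere vanishing Jacobian, so $Dg$ is invertible throughout $U$. Differentiating $\phi\circ g=\mathrm{id}$ at $p$ gives $D\phi(p)\,Dg(p)=I$, hence $D\phi(p)=Dg(p)^{-1}$ is invertible and $\det D\phi(p)\neq 0$. Here the containment $U\subset\subset\phi(U)$ is essential rather than cosmetic: the same Earle--Hamilton estimate forces the eigenvalues of $Dg(p)$ into the unit disc, so those of $D\phi(p)$ lie strictly outside it and $p$ is repelling. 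This is precisely what excludes the degenerate Jacobians occurring at (super)attracting fixed points, which would instead force $\phi(U)\subset\subset U$.

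Finally, for (iii) I would argue by contradiction via normal families, using the repelling structure from (ii). If $O^-_{\phi}(p)\cap U$ were infinite then, $U$ being bounded, it would accumulate at some $z^\ast\in\bar U$ along points $z_j\in U$ with $\phi^{n_j}(z_j)=p$; since each individual iterate $\phi^N$ has discrete fibres, the exponents must satisfy $n_j\to\infty$. As $U\subset F(\phi)$, a subsequence of $\{\phi^{n_j}\}$ converges locally uniformly to a holomorphic map $F$ (it cannot be compactly divergent, since $\phi^{n_j}(z_j)=p$ stays bounded with $z_j\to z^\ast$). By (ii) the map $\phi$ is a local biholomorphism at $p$, so on a small neighbourhood $N$ of $p$ it is injective and uniformly expanding about $p$; backward injectivity then shows that the only backward-orbit point whose whole trajectory remains in $N$ is $p$ itself, while an accumulation of the $z_j$ would force $D\phi(p)^{n_j}$ to stay bounded, contradicting that every eigenvalue of $D\phi(p)$ has modulus $>1$. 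Hence $O^-_{\phi}(p)\cap U$ is finite.

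The step I expect to be the main obstacle is the construction of the single-valued holomorphic inverse branch $g$ on all of $\bar U$: one must verify that the holomorphic null-homotopy genuinely lifts through the branched covering $\phi|_U$, that is, that it trivialises the monodromy and can be carried across the branch locus to yield a globally defined section. Once $g$ is available, the remaining ingredients --- Earle--Hamilton, the non-vanishing Jacobian of an injective holomorphic map, and the normal-families estimate --- are comparatively routine.
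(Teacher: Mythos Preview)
Your route through a global holomorphic section $g\colon\bar U\to U$ and Earle--Hamilton is different from the paper's. For (i) the paper simply invokes Lemma~4.3 of Forn\ae ss--Sibony, whose proof is close in spirit to what you sketch, so modulo the branched-cover lifting issue you already flag, your (i) is fine.

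There are, however, two genuine gaps. In (ii) your argument shows $D\phi$ is invertible only at the particular point $p$ fixed by the section $g$; another fixed point $q\in U$ of $\phi$ need not satisfy $g(q)=q$ (you only know $\phi(g(q))=q=\phi(q)$, so $g(q)$ and $q$ lie in the same fibre), and the proposition is later applied in the paper precisely to a fixed point that is not a priori the Earle--Hamilton one. The paper instead argues by contradiction for an arbitrary fixed point: if $D\phi(p)$ had a zero eigenvalue, the local stable set $W_s^\phi$ would be a positive-dimensional graph near $p$; choosing a subsequential limit $\psi_0=\lim\phi^{n_k}$ on the Fatou component, the analytic variety $W_1=\psi_0^{-1}(p)$ contains $W_s^\phi$, and one shows $W_1\cap\partial U=\emptyset$ (otherwise some $\phi^{n}(\partial U)$ would meet $U$, contradicting $U\subset\subset\phi^n(U)$), so $W_1\cap U$ is a compact analytic subset of $U$, hence finite, contradicting the infiniteness of $W_s^\phi$.

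Your (iii) also does not close. The repelling estimate controls backward orbits only in a small neighbourhood $N$ of $p$, but the accumulation point $z^\ast$ of the $z_j$ may lie far from $p$; there is no reason the convergence $\phi^{n_j}\to F$ with $F(z^\ast)=p$ forces $D\phi(p)^{n_j}$ to stay bounded, since the linearisation at $p$ says nothing about $z_j$ away from $N$. The paper's argument for (iii) is a direct corollary of the machinery from (ii): once one knows $W_1\cap U$ is finite, the inclusion $O^-_\phi(p)\cap U\subset W_1$ (because $\phi^m(z)=p$ and $\phi(p)=p$ give $\phi^{n_k}(z)=p$ for $n_k\ge m$) finishes immediately. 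If you want to keep your section-based framework, you would still need this analytic-variety step, or an independent argument that controls backward orbits globally in $U$ rather than just near $p$.
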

		\begin{proof}
		That the map $\phi$ has a fixed point $p$ in $U$ follows from Lemma 4.3 in \cite{FS}.
		\par\smallskip\no 
		Without loss of generality we can assume $p=0$. Consider $\psi(z)=\phi(p+z)-p$ and $\Omega=\{z-p:z \in U \}.$ Then $\psi$ is the required map with the properties $\Omega \subset \subset \psi(\Omega)$ and $0$ is a fixed point for $\psi.$
		\par\smallskip\no 
		Suppose $\psi$ is not invertible at $0,$ i.e., $A=D\psi(0)$ has a zero eigenvalue. Let $\la_i$, $1 \le i \le k$ be the eigenvalues of $A.$ Therefore there exist an $\alpha$ such that $0< \alpha < 1$  and $1 <m \le k$ such that $0=|\la_i|< \alpha$ for $1 \le i \le m$ and $|\la_i|> \alpha$ for $m< i \le k.$ Choose $\delta>0$ such that 
		\[0 < \|D_{\mathbb C}\psi(z)-A\| < \epsilon_0=\min\Big\{\alpha,\big||\la_i|-\alpha\big|\Big\}\]
		for $z \in B(0, \delta)$ and $m< i \le k.$ Let $\Psi$ be a Lipschitz map in $\mbb C^k$ such that 
		\[Lip(\Psi)=\|A\|+\epsilon_0 \]
		and
		\[\Psi \equiv \psi \;\; \mbox{on} \; \; B(0,\delta).\]
		Now \[W_s^{\Psi}:=\{z \in \mbb C^k:|\alpha^n \Psi^n(z)|\;\mbox{is bounded}\;\}\]
		can be realized as a graph of a continuous function (See \cite{Yoccoz}) $G_{\Psi}: \mbb C^m \to \mbb C^{k-m}$ such that $G_{\Psi}(0)=0.$ Since $${W_s^{\Psi}}={W_s^{\psi}} \; \text{on} \; B(0,\delta/2)$$
		$W_s^{\psi} \cap \Omega $ is an infinite non-empty set containing $0.$ Also ${\psi^{n_k}}_{|\bar{\Omega}} \to \psi_0$ for some sequence $n_k$ and $\psi_0$ is holomorphic on the component (say $F_0$) of $F(\psi)$ containing $\Omega$. Let \[W_1^{\psi}=\{z \in F_0:\psi^{n_k}(z)\to 0 \;\mbox{as}\; k \to \infty \}.  \]
		Then $ W_s^{\psi} \cap F_0 \subset W_1^{\psi} $ and \[W_1^{\psi}=\bigcap_{i=1}^k {\psi_{0,i}}^{-1}(0)\]
		where ${\psi_{0,i}}$ is the $i-$th coordinate function of $\psi_0.$ If $W_1^{\psi} \cap \partial\Omega=\emptyset $ then $W_1^{\psi} \cap\Omega$ and hence $ W_s^{\psi} \cap \Omega$ will have to be finite which is not true. Thus there exists a positive integer $n_0$ such that $\psi^{n_0}(\partial\Omega) \cap \Omega \neq \emptyset$ but by assumption it follows that $\Omega \subset \subset \psi^n(\Omega)$ for all $n \ge 1,$ i.e., $\psi^{n}(\partial\Omega) \cap \Omega=\emptyset$ for all $n > 0.$ This proves that $A$ has no zero eigenvalues.
		\par\smallskip\no 
		Note that this observation also reveals that $W_1^{\psi} \cap \Omega $ has to be a finite set, and since
		\[O^-_{\psi}(0) \subset W_1^{\psi}\]
	    the backward orbit of $0$ under $\psi$  is finite.
		\end{proof}
		\no 
		Now we can state and prove Theorem \ref{iso} for semigroups generated by the elements of $\mathcal{E}_k.$
			\begin{thm}
				 Let $G=\langle\phi_1, \phi_2, \hdots \rangle$ where each $\phi_i \in \mathcal{E}_k.$  If the Julia set $J(G)$ contains an isolated point (say $a$) then there exists a $\psi \in G$ such that $\psi$ is conjugate to an upper triangular automorphism.
				\end{thm}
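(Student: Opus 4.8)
The plan is to follow the proof of Theorem \ref{iso} as closely as possible, the one genuinely new feature being that an element of $\mathcal{E}_k$ need not be injective, so that $\psi^{-1}$ is only available as a local branch; Proposition \ref{prop2} is precisely the tool that repairs this. Without loss of generality take $a=0$. I would first invoke Proposition \ref{prop1} to obtain a ball $U=B(0,\ep)$ with $U\setminus\{0\}\subset F(G)$ and an element $\psi\in G$ with $U\subset\subset\psi(U)$; since $\{\psi^n\}\subset G$ we have $F(G)\subset F(\psi)$, so $0$ is at worst an isolated point of $J(\psi)$ and, exactly as in the injective case, $\psi(0)=0$.

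Next I would apply Proposition \ref{prop2} to the single map $\psi$ on $U$. Its hypotheses are readily checked: the ball $U$ is holomorphically homotopic to its centre $0$ inside $\psi(U)$ through $h(t,z)=tz$ (whose image is $\bar U\subset\psi(U)$), the map $\psi$ is proper onto its image on $U$ because $|\psi|$ is large near $\partial U$ by the construction in Proposition \ref{prop1}, and the relevant domain sits in $F(\psi)$ apart from the isolated point, using $F(G)\subset F(\psi)$. From Proposition \ref{prop2}(ii) the derivative $D\psi(0)$ is then invertible — this is the fact that comes for free in the injective case but must be earned here — and from (iii) the backward orbit $O^-_\psi(0)\cap U$ is finite.

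With $D\psi(0)$ invertible, a contracting branch of $\psi^{-1}$ fixing $0$ is defined near $0$, and I would reproduce the argument of Theorem \ref{iso}: the map $\psi_\al=\al\,\psi^{-1}$ is a self-map of the bounded domain $\psi(U)$ fixing $0$ for a suitable $\al>1$, so the Carath\'eodory--Cartan--Kaup--Wu theorem (Theorem 11.3.1 in \cite{Krantz}) forces every eigenvalue of $D\psi(0)$ to have modulus at least $\al>1$. Hence $0$ is a repelling fixed point, $U\setminus\{0\}\subset F(\psi)$, and the uniform divergence $\|\psi^{n_k}\|_{\partial U}\to\infty$ yields $B(0,R)\subset\psi^{n_k}(U)$ for every $R>0$; in particular $\psi(\mbb C^k)=\mbb C^k$.

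The step I expect to be the main obstacle is the passage from this local and surjective information to the conclusion that $\psi$ is an automorphism of $\mbb C^k$: since $\psi$ may a priori have degree greater than one, one must show that the contracting branch of $\psi^{-1}$ at $0$ extends to a globally defined inverse. This is where Proposition \ref{prop2}(ii)--(iii) does its work — invertibility at the fixed point keeps the inverse branch single valued, while the finiteness of the backward orbit prevents new sheets from accumulating, forcing the attracting basin of that branch to exhaust $\mbb C^k$ and hence $\psi$ to be injective. Once $\psi$ is known to be an automorphism of $\mbb C^k$ with $0$ a repelling fixed point, the theorem of Rosay--Rudin \cite{RR1} applies exactly as in Theorem \ref{iso} to give that $\psi$ is conjugate to an upper triangular automorphism; the remaining parts of the argument are routine transcriptions of the injective case.
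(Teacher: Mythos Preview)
Your overall plan has the right shape, but there are two genuine gaps that the paper's proof handles differently.

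\medskip

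\textbf{First gap: $\psi(0)=0$ is not free.} You assert that ``exactly as in the injective case, $\psi(0)=0$.'' But the last line of Proposition~\ref{prop1} (the backward invariance giving $\psi^{-1}(0)=0$) is stated only for semigroups generated by proper maps; for general $\mathcal{E}_k$ the Julia set need not be backward invariant, so nothing forces a preimage of $0$ in $U$ to equal $0$. The paper does not assume this. Instead it splits into cases: if $0\in J(\psi)$ one quotes Theorem~4.2 of \cite{FS} immediately; if $\Omega\subset F(\psi)$ one passes to a component $\omega$ of $\psi^{-1}(B(0,R))$ contained in $\Omega$, applies Proposition~\ref{prop2} there to produce a fixed point $p\in\omega$ (a priori possibly $\neq 0$), shows $p$ is repelling, and only then concludes $p\in J(\psi)\subset J(G)$, whence $p=0$ since $\omega\cap J(G)=\{0\}$.

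\medskip

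\textbf{Second gap: invertibility at the fixed point is not enough for the self-map argument.} You say that once $D\psi(0)$ is invertible, a local branch $\psi^{-1}$ exists near $0$ and then $\psi_\alpha=\alpha\psi^{-1}$ is a self-map of the \emph{large} domain $\psi(U)$. But a local branch is only defined on a small neighbourhood of $0$; to run Carath\'eodory--Cartan--Kaup--Wu on $\psi(U)$ (or on $B(0,R)$) you need $\psi^{-1}$ single-valued on that whole domain, i.e.\ you need $\psi$ to be a biholomorphism from some $\omega$ onto $B(0,R)$. This is exactly the content of the paper's claim $\psi^{-1}(p)\cap\omega=\{p\}$: using the finiteness of $O^-_\psi(p)\cap\omega$ from Proposition~\ref{prop2}(iii), a pigeonhole/chain argument shows that if $\psi$ had two preimages of $p$ in $\omega$ one would manufacture infinitely many distinct points of the backward orbit, a contradiction. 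Only after this degree-one conclusion does the self-map $\Psi_\beta(z)=\beta\psi^{-1}(z)$ make sense on $B(0,R)$ and yield $|\lambda_i|\ge\beta>1$. Your sketch skips this step, so the CCKW application is unjustified.

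\medskip

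Finally, your endgame --- promoting the local repelling data to ``$\psi$ is a global automorphism'' via a vague sheet-counting --- is both imprecise and unnecessary. Once $p=0$ is known to be a repelling fixed point and isolated in $J(\psi)$, the paper simply invokes Theorem~4.2 of \cite{FS} for the single map $\psi$, which already contains the global-automorphism and Rosay--Rudin conclusions.
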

		\begin{proof}
		Assume $a=0.$ Then as before by Proposition \ref{prop1} there exists a map $\psi \in G$ and a domain $\Omega$ such that $\Omega \subset \subset \psi(\Omega).$
		\par\smallskip\no 
		If $0$ is in the Julia set of $\psi$ then $0$ is an isolated point in $J(\psi)$ and by applying Theorem 4.2 in \cite{FS}, it follows that $\psi$ is conjugate to an upper triangular automorphism.
		\par\smallskip\no
		Suppose $\Omega \subset F(\psi).$  By Proposition \ref{prop2}, $\psi$ has a fixed point in $\Omega$ i.e.,$\{\psi^n\}$ has a convergent subsequence in $\bar{\Omega}$. 
		\par\smallskip\no 
		{\it Case 1:} Suppose that $G=\langle\phi_1,\phi_2, \hdots\rangle$ where each $\phi_i \in \mathcal P_k.$ 
		\par\smallskip\no 
		Applying Proposition \ref{prop1}, we have that $\psi^{-1}(0)=0$ and there exists $\psi \in G$ such that 
		\begin{align}
		\Omega \subset \subset B(0,R) \subset \subset \psi(\Omega)
		\end{align}
		where $\Omega$ is a sufficiently small ball at $0$ and $R>0$ is a sufficiently large number. Now let $\omega$ is the component of $\psi^{-1}(B(0,R))$ in $\Omega$ containing the origin. Also from Proposition \ref{prop2} it follows that $0$ is a regular point of $\psi$, which implies that $\psi$ is a biholomorphism on $\omega.$ Define $\Psi_\beta$ on $\psi(\omega)$ as \[\Psi_\beta (z)=\beta \psi^{-1}(z)\] and note that $\Psi_\beta$ is a self map of $B(0,R)$ for some $\beta>1$ with a fixed point at $0.$ Then the eigenvalues of $D_{\mbb C}{\Psi_{\beta}}(0)$ are in the closed unit disc, i.e., \[\beta |\la_i^{-1}| \le 1\] where $\la_i$ are eigenvalues of $A.$ Hence $0$ is a repelling fixed point for the map $\psi$ and $0 \notin F(\psi).$ Since $0$ is an isolated point in the Julia set of $\psi$, by Theorem 4.2 in \cite{FS} $\psi$ is conjugate to an upper triangular automorphism of $\mbb C^k.$ 
		\par\smallskip\no 
		{\it Case 2:} Suppose that $G=\langle\phi_1,\phi_2, \hdots\rangle$ where each $\phi_i \in \mathcal E_k.$
		\par\smallskip\no 
		As before by Proposition \ref{prop1} there exists $\psi \in G$ such that $$\Omega \subset B(0,R) \subset \psi(\Omega)$$ and let $\omega$ be a component of $\psi^{-1} (B(0,R)) \subset \Omega.$ Then $\omega$ satisfies all the condition of Proposition \ref{prop2} and hence there exists a fixed point $p$ of $\psi$ in $\omega$ and $O^-_{\psi}(p)\cap \omega$ is finite. 
		\par\smallskip\no 
		{\it Claim:} $\psi^{-1}(p) \cap \omega=p$ 
		\par\smallskip\no 
		Suppose not i.e., \[\#\{\psi^{-1}(p)\}=\text{the cardinality of}\; \psi^{-1}(p) =m\] and $m \ge 2.$ Let $a_1 \in \psi^{-1}(p)\setminus \{p\}$  in $\omega$ and define $$S_1=O^-_{\psi}(a_1) \cap \omega.$$ Then $S_1 \subset O^-_{\psi}(p)\cap \omega.$ Now choose inductively $a_n \in \psi^{-1}(a_{n-1}) \setminus \{a_{n-1}\}$ for $n \ge 2$ and define $$S_n=O^-_{\psi}(a_n) \cap \omega.$$ Then \[S_n \subset S_{n-1} \; \; \mbox{and} \; \; \bigcup_{i=1}^n S_i\subset O^-_{\psi}(p)\cap \omega\] for every $n \ge 2.$ Note that $a_n \notin S_n,$ otherwise there is a positive integer $k_n >0$ such that $\psi^{k_n}(a_n)=a_n$ i.e., $a_n$ is a periodic point of $\psi$, and $$\psi^{k_n+m}(a_n)=p$$ for any $m>n$. Since $O^-_{\psi}(p)\cap \omega$ is finite it follows that $S_n$ has to be empty for large $n$. This implies that there exists a $n_0 \ge 1$ such that $\psi^{-1}(a_{n_0})=a_{n_0}$ and $a_{n_0} \in \omega.$ But by Proposition \ref{prop2} $\psi$ is invertible at its fixed points which means that $a_{n_0}$ is a regular 
value of $\psi$ and
		\[\#\{\psi^{-1}(a_{n_0})\}=m \ge 2\] which is a contradiction! Hence the claim.
		\par\smallskip\no 
		Now by similar arguments as in the case of proper maps it follows that $\psi$ is a biholomorphism from $\omega$ to $B(0,R)$ and $p$ is a repelling fixed point of $\psi$ and hence lies in $J(\psi) \subset J(G).$ Since $\omega \cap J(G)=\{0\}$, we have $p=0$ which is an isolated point in the Julia set of $\psi$ and hence $\psi$ is conjugate to an upper triangular automorphism.
		\end{proof}
		\section{Recurrent and Wandering Fatou components of a semigroup G.}
		 As discussed in Section \ref{Introduction} we will be studying the properties of recurrent and wandering Fatou components of semigroup generated by entire maps of maximal generic rank on $\mathbb{C}^k$. The wandering and the recurrent Fatou components for a semigroup $G$ are defined as:
		 
		\begin{defn}
		Let $G=\langle\phi_1,\phi_2,\hdots \rangle$ where each $\phi_i\in \mathcal{E}_k $. Given a Fatou component $\Omega$ of $G$ and $\phi\in G$, let $\Omega_{\phi}$  be the Fatou component of $G$ containing $\phi(\Omega\setminus \Sigma_\phi)$ where $\Sigma_{\phi}$ is the set where the Jacobian of $\phi$ vanishes. A Fatou component is  {\it wandering} if the set $\big\{ \Omega_{\phi}:\phi\in G\big\}$ contains infinitely many distinct elements.
		\end{defn}
		
		\begin{defn}
		Let $G=\langle\phi_1,\phi_2,\hdots \rangle$ where each $\phi_i\in \mathcal{E}_k $. A Fatou component $\Omega$ of $G$ is {\it recurrent} if for any sequence $\{g_j\}_{j\geq 1}\subset G$, there exists a subsequence $\{g_{j_m}\}$ and a point $p\in \Omega$ (the point $p$ depends on the chosen sequence) such that $g_{j_m}(p)\rightarrow p_0 \in \Omega$.
		\end{defn}
		
		\no
		Note that we assume here a stronger definition of recurrence than the existing definition for the case of iterations of a single holomorphic endomorphism of $\mbb C^k.$ The natural extension of this definition to the semigroup set up would have been the following,  a Fatou component $\Omega$ is recurrent if there is a  point $p \in \Omega$ and a sequence $\phi_n \in \Omega$ such that $\phi_n(p) \to p_0,$ where $p_0 \in \Omega.$ If this definition of recurrence is adopted then it is possible that a {\it Recurrent} domain is{\it Wandering.}  In particular, Theorem 5.3 in \cite{HM1} gives an example of a polynomial semigroup $G=\langle \phi_1,\phi_2, \hdots \rangle$ in $\mbb C$, such that there exists a Fatou component, (say $\cal{B}$, which is conformally equivalent to a disc), that is wandering, but returns to the same component infinitely often. This means that there exists sequences say $\phi_n^+ \in G$ and $\phi_n^- \in G$ such that $\phi_n^-(\cal{B}) \subset \cal{B}$ or $\phi_n^+(\cal{B})$ are contained in distinct Fatou components of $G.$ This example can be easily adapted in higher dimensions.
		\begin{ex}
  Consider the semigroup $\cal{G}=\langle \Phi_1, \Phi_2, \hdots, \rangle$  generated by the maps
\[\Phi_i(z,w)=(\phi_i(z), w^2)\] where $\phi_i$ are the polynomial maps as in Theorem 5.3 of \cite{HM1}. Let $\{\Phi_n^-\}_{n \ge 1} \subset G$ be the sequence that maps $\cal{B} \times \mathbb{D}$ into itself and $\{\Phi_n^+\}_{n \ge 1} \subset G$ be the sequence such that $$\Phi^+_i(\cal{B}\times \mbb D) \cap \Phi^+_j( \cal{B}\times \mbb D) = \emptyset$$ for every $i \neq j.$ Also $\cal{B}\times \mbb D$ is a Fatou component of $\cal{G}$ as any point on the boundary of $\cal{B}\times \mbb D$, is either in the Julia set of $G$ or in the Julia set of the map $z \to z^2$. Hence $\cal{B}\times \mbb D$ is a Fatou component which is wandering, but may be recurring as well if we adapt the classical definition of recurrence.
		\end{ex}
		
		\no Hence we work with a stronger  definition of recurrence than the classical one. Next we provide an alternative description for recurrent Fatou components of $G$.
		\begin{lem}
		A Fatou component $\Omega$ is recurrent if and only if  for any sequence $\{\phi_j\}\subset G$, there exists a compact set $K\subset \Omega$ and a subsequence $\{\phi_{j_m}\}$  such that $\phi_{j_m}(p_{j_m})\rightarrow p_0 \in \Omega$ for a sequence $\{p_{j_m}\}\subset K$.
		\end{lem}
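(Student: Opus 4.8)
The plan is to prove both implications, with the forward direction being essentially a restatement and all of the content residing in the converse. Throughout I use that $\Omega$ is a Fatou component of $G$, so that $G$ is normal on $\Omega$ in the sense that every sequence drawn from $G$ admits a subsequence that, on compact subsets of $\Omega$, either converges uniformly to a holomorphic limit or diverges uniformly to infinity.

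First I would dispose of the ``only if'' direction, which requires no real work. Assume $\Omega$ is recurrent and fix an arbitrary sequence $\{\phi_j\}\subset G$. By definition of recurrence there is a subsequence $\{\phi_{j_m}\}$ and a point $p\in\Omega$ with $\phi_{j_m}(p)\to p_0\in\Omega$. Taking the compact set to be the singleton $K=\{p\}$ and the constant sequence $p_{j_m}\equiv p\subset K$, the stated condition holds verbatim.

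For the ``if'' direction I would start with an arbitrary $\{\phi_j\}\subset G$ and let $K\subset\Omega$, a subsequence $\{\phi_{j_m}\}$, and a sequence $\{p_{j_m}\}\subset K$ with $\phi_{j_m}(p_{j_m})\to p_0\in\Omega$ be as furnished by the hypothesis. Since $K$ is compact, I pass to a further subsequence so that $p_{j_m}\to p\in K\subset\Omega$. Because $G$ is normal on $\Omega$, after extracting once more the sequence $\{\phi_{j_m}\}$ either converges uniformly on compact subsets of $\Omega$ to a holomorphic limit $h$, or diverges uniformly to infinity on compact subsets of $\Omega$.

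The crux, which I expect to be the main obstacle, is to eliminate the divergent alternative and to keep the limit point inside $\Omega$. If $\phi_{j_m}\to\infty$ uniformly on the compact set $K$, then $\min_{z\in K}|\phi_{j_m}(z)|\to\infty$, forcing $|\phi_{j_m}(p_{j_m})|\to\infty$; this contradicts $\phi_{j_m}(p_{j_m})\to p_0\in\mbb C^k$. Hence $\phi_{j_m}\to h$ locally uniformly on $\Omega$. A standard estimate (uniform convergence on $K$ together with $p_{j_m}\to p$ and continuity of $h$) then gives $\phi_{j_m}(p_{j_m})\to h(p)$, so that $h(p)=p_0\in\Omega$. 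Finally, evaluating the locally uniform limit at the single point $p\in\Omega$ yields $\phi_{j_m}(p)\to h(p)=p_0\in\Omega$, which is exactly the recurrence condition for the chosen sequence. It is worth stressing that the compactness of $K$, rather than an arbitrary spreading sequence of base points, is precisely what allows one to rule out escape to infinity and to transfer the convergence of $\phi_{j_m}(p_{j_m})$ to convergence of $\phi_{j_m}$ at the fixed base point $p$.
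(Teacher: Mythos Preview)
Your proof is correct and follows essentially the same route as the paper's: pass to a subsequence so that the base points $p_{j_m}$ converge to some $q_0\in K$, and then use normality of $G$ on $\Omega$ to conclude $\phi_{j_m}(q_0)\to p_0\in\Omega$. The paper's version is terser---it omits the forward implication (which is indeed trivial) and compresses your elimination of the divergent alternative into a single appeal to normality---but the argument is the same.
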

		\begin{proof}
		Take any sequence $\{\phi_j\}\subset G$. Then there exists a subsequence $\{\phi_{j_m}\}$ and points $\{p_{j_m}\}\subset K$ with $K$ compact in $\Omega$ such that $$\phi_{j_m}(p_{j_m})\rightarrow p_0 \in \Omega.$$ Without loss of generality we assume $p_{j_m}\rightarrow q_0 \in K$.
		It follows that $\phi_{j_m}(q_0)\rightarrow p_0 \in \Omega$ using the fact that any sequence of $G$ is normal on the Fatou set of $G$.
		\end{proof}

		\begin{prop}\label{recurrence_prop}
		  Let $G=\langle \phi_1,\phi_2,\hdots ,\phi_m\rangle$ where each $ \phi_i \in \mathcal{E}_k$ for every $1 \le i \le m$. If $\Omega$ is a recurrent Fatou component of $G$, then $G$ is locally bounded on $\Omega$. Moreover $\Omega$ is pseudoconvex and Runge.
		  \end{prop}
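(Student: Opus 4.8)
The plan is to first establish local boundedness by playing the recurrence hypothesis against the normality dichotomy on the Fatou set, and then to read off the Runge property and pseudoconvexity from (the proof of) Proposition \ref{bound}.

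First I would argue that $G$ is locally (uniformly) bounded on $\Omega$ by contradiction. If it were not, there would be a point $z_0 \in \Omega$ and a radius $r>0$ with $\bar{B}(z_0,r)\subset\subset\Omega$ on whose interior $G$ fails to be bounded, so one can choose a sequence $\{g_n\}\subset G$ with $\|g_n\|_{B(z_0,r)}\to\infty$. Since $\bar{B}(z_0,r)\subset\Omega\subset F(G)$, normality yields a subsequence $\{g_{n_k}\}$ that either converges uniformly on compact subsets of $\Omega$ or diverges uniformly to infinity on them. The first alternative is impossible, because uniform convergence on the compact set $\bar{B}(z_0,r)$ would keep $\|g_{n_k}\|_{B(z_0,r)}$ bounded, contradicting $\|g_{n_k}\|_{B(z_0,r)}\to\infty$. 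The second alternative is impossible by recurrence: applying the definition to $\{g_{n_k}\}$ produces a further subsequence $\{g_{n_{k_l}}\}$ and a point $p\in\Omega$ with $g_{n_{k_l}}(p)\to p_0\in\Omega$, whereas uniform divergence at that same point $p$ would force $g_{n_{k_l}}(p)\to\infty$. Both contradictions together give local boundedness of $G$ on $\Omega$.

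With local boundedness in hand, $\Omega$ satisfies the hypothesis of Proposition \ref{bound}, whose proof already contains the desired conclusion: the Claim there shows that for every $K\subset\subset\Omega$ the polynomial hull $\hat{K}$ satisfies $\hat{K}\subset\Omega$, that is, $\Omega$ is polynomially convex (Runge). Concretely I would invoke Oka--Weil on the polynomially convex compact set $\hat{K}_\delta$ to approximate each entire $\phi\in G$ by a polynomial, deduce via the maximum principle for the hull that $G$ is uniformly bounded on $\hat{K}_\delta$, hence $\hat{K}_\delta\subset F(G)$, and then conclude $\hat{K}\subset\hat{K}_\delta\subset\Omega$ using connectedness of the hull. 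This is precisely the Runge assertion, so I would simply cite Proposition \ref{bound} for it.

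Finally, pseudoconvexity is a formal consequence of polynomial convexity: for any compact $K\subset\Omega$ one has $\hat{K}_{\mathcal{O}(\Omega)}\subset\hat{K}\subset\subset\Omega$ since polynomials lie in $\mathcal{O}(\Omega)$, so the $\mathcal{O}(\Omega)$-hull is relatively compact in $\Omega$; thus $\Omega$ is holomorphically convex, i.e.\ a domain of holomorphy, and in particular pseudoconvex. I expect the only genuinely delicate step to be the local boundedness argument, where the care lies in applying the normality dichotomy on compact subsets of $\Omega$ so that both the convergent and the divergent alternatives can be excluded cleanly; once that is settled, the Runge and pseudoconvexity statements follow from Proposition \ref{bound} together with a standard hull comparison.
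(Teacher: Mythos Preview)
Your argument is correct and follows essentially the same route as the paper: the paper also argues local boundedness by contradiction via recurrence and then cites the Runge claim inside the proof of Proposition~\ref{bound}, with pseudoconvexity read off from polynomial convexity. Your version is simply more explicit in separating the two alternatives of the normality dichotomy, whereas the paper compresses this into the single observation that recurrence already forces a locally uniformly convergent subsequence.
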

		  \begin{proof}
		  Assume $G$ is not locally bounded on $\Omega$. Then there exists a compact set $K\subset\Omega$  and $\{g_r\} \subseteq  G$  such that $\lvert g_r(z_r) \rvert > r$ with $z_r \in K$ for every $r \geq 1$. Clearly this can not be the case since $\Omega$ is a recurrent Fatou component, so we can always get a subsequence $\{g_{r_k}\}$ from the sequence $\{g_r\} \in G$ such that it converges to a holomorphic function uniformly on compact set in $\Omega$ and in particular on $K$. From the proof of Proposition \ref{bound}, it follows that local boundedness of $G$ on $\Omega$ implies that $\Omega$ is polynomially convex. Hence $\Omega$ is pseudoconvex.
		  \end{proof}
		  
	%
	\begin{thm}
	 Let $G= \langle \phi_1,\phi_2, \hdots \rangle$ where each $\phi_i \in \mathcal{E}_k.$ Assume that $\Omega$ is a {\it recurrent} Fatou component of $G.$ If there exists a $\phi \in G$ such that $\phi(\Omega)$ is contained in the Fatou set of $G$ i.e., $\phi(\Omega) \subset F(G)$ then one of the following is true
	 \begin{enumerate}
	  \item [(i)] There exists an attracting fixed point (say $p_0$) in $\Omega$ for the map $\phi.$
	  \item [(ii)] There exists a closed connected submanifold $M_\phi \subset \Omega$ of dimension $r_\phi$ with $1 \le r_\phi \le k-1$  and an integer $l_\phi >0$ such that 
	  \begin{enumerate}
	  \item[(a)] $\phi^{l_\phi}$ is an automorphism of $M_{\phi}$ and $\overline{\{\phi^{nl_\phi}\}_{n \ge 1}}$ is a compact subgroup of ${\rm Aut}(M_\phi).$
	  \item[(b)] If $f \in \overline{\{\phi^n\}}$, then $f$ has maximal generic rank $r_\phi$ in $\Omega.$
	  \end{enumerate}
	  \item[(iii)] $\phi$ is an automorphism of $\Omega$ and $\overline{\{\phi^n\}}$ is a compact subgroup of ${\rm Aut}(\Omega).$ 
	 \end{enumerate}

	\end{thm}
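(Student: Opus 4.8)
\emph{Proof proposal.} The plan is to reduce the statement to an analysis of the single sequence of iterates $\{\phi^n\}$ acting on $\Omega$, and then to carry out the Fornaess--Sibony trichotomy (\cite{FS}) by extracting an idempotent limit map. First I would record that the forward orbit of $\Omega$ stays in the Fatou set: the hypothesis $\phi(\Omega)\subset F(G)$ together with Proposition \ref{lem1}(i) gives, inductively, $\phi^{n}(\Omega\setminus\Sigma)\subset F(G)$ for every $n\ge 1$, where $\Sigma$ is the union of the critical sets. Since $\Omega$ is recurrent, there is a subsequence $\{\phi^{n_j}\}$ and a point $p\in\Omega$ with $\phi^{n_j}(p)\to p_0\in\Omega$, so that the component $\Omega_{\phi^{n_j}}$ equals $\Omega$ for all large $j$. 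By Proposition \ref{recurrence_prop}, $G$ is locally bounded on $\Omega$, hence $\{\phi^{n}|_\Omega\}$ is a normal family and every locally uniform limit $h$ lies in $\mathrm{Hol}(\Omega,\mathbb C^k)$. A preliminary step is to show that each such limit in fact maps $\Omega$ into $\Omega$: since $\partial\Omega\subset J(G)$, if some $h$ were to carry an interior point to $\partial\Omega$ one derives a contradiction with normality via the open mapping/Hurwitz property, unless $h$ is constant, and recurrence excludes a constant limit lying on $\partial\Omega$.

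Next I would produce an idempotent. Consider $\Gamma$, the set of limits (in the compact--open topology) of subsequences of $\{\phi^n\}$; by the previous paragraph $\Gamma$ is a nonempty compact subset of $\mathrm{Hol}(\Omega,\Omega)$ that is closed under composition. Because the images $\phi^{n_j}(\Omega)$ converge to a relatively compact subset of $\Omega$ on which $\phi^{m}$ itself converges, compositions pass to the limit: arranging $n_{j+1}-n_j\to\infty$ and extracting a convergent subsequence yields an element $\rho\in\Gamma$ with $\rho\circ\rho=\rho$. By the holomorphic retraction theorem (as used in \cite{FS}), $M_\phi:=\rho(\Omega)$ is then a closed connected complex submanifold of $\Omega$ with $\rho|_{M_\phi}=\mathrm{id}$. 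Put $r_\phi=\dim_{\mbb C} M_\phi$.

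The three cases correspond to $r_\phi=0$, $r_\phi=k$, and $0<r_\phi<k$. If $r_\phi=0$ then $\rho\equiv p_0$; continuity gives $\phi(p_0)=p_0$, and $D\phi^{n_j}(p_0)\to 0$ forces the spectral radius of $D\phi(p_0)$ to be $<1$, so $p_0$ is an attracting fixed point, giving (i). If $r_\phi=k$ then $\rho=\mathrm{id}_\Omega\in\Gamma$, so every element of $\Gamma$ admits an inverse in $\Gamma$ and $\Gamma$ is a compact group; in particular $\phi\in\Gamma$ is invertible, i.e.\ $\phi$ is an automorphism of $\Omega$ with $\overline{\{\phi^n\}}$ a compact subgroup of $\mathrm{Aut}(\Omega)$, giving (iii). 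If $0<r_\phi<k$, then since $\rho|_{M_\phi}=\mathrm{id}_{M_\phi}$ the restrictions $\{f|_{M_\phi}:f\in\Gamma\}$ form a compact semigroup of self-maps of $M_\phi$ containing the identity, hence a compact subgroup of $\mathrm{Aut}(M_\phi)$; identifying $\phi\circ\rho$ and $\rho\circ\phi$ as elements of $\Gamma$ shows $\phi(M_\phi)\subset M_\phi$, and choosing $l_\phi$ to be the period with which $\phi$ permutes the relevant structure makes $\phi^{l_\phi}|_{M_\phi}$ an automorphism, yielding (a); finally each $f\in\overline{\{\phi^n\}}$ factors through the rank-$r_\phi$ retraction and restricts to an automorphism of $M_\phi$, so its generic rank in $\Omega$ is exactly $r_\phi$, yielding (b). This is case (ii).

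The main obstacle is the second paragraph: producing a genuine idempotent $\rho$ whose image is a submanifold, and controlling the boundary so that all limit maps remain interior to $\Omega$. Two further subtleties are specific to the class $\mathcal E_k$. Since $\phi$ need not be injective, the automorphism assertions on $M_\phi$ (and the existence of the power $l_\phi$) cannot be read off from injectivity of $\phi$; they must instead be extracted from the compactness of $\Gamma|_{M_\phi}$, using that every element of a compact transformation group is bijective. One must also verify carefully that $\phi(M_\phi)\subset M_\phi$ and determine the correct period $l_\phi$. Once these points are secured, the remaining estimates follow the single-endomorphism scheme of Fornaess--Sibony applied to $\phi$ on $\Omega$ together with Proposition \ref{prop2}.
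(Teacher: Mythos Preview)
Your overall strategy---extract an idempotent limit $\rho$, invoke the holomorphic retraction theorem to get a submanifold $M_\phi=\rho(\Omega)$, and then run a compact-semigroup argument on $\Gamma|_{M_\phi}$---is a legitimate alternative to the paper's route, which instead takes a limit $\tilde h=\lim\phi^{p_j}$ with $p_j=m_{j+1}-m_j$, defines $M=\{x\in\Omega:\tilde h(x)=x\}$, and proves directly via a rank computation on $\mathrm{Id}-\tilde h$ that $M$ is a closed submanifold of the correct dimension; the paper then finds the period $l_\phi$ by a component argument and obtains compactness of $\overline{\{\phi^{nl_\phi}\}}$ through the Bochner--Montgomery theorem rather than through abstract compact-semigroup reasoning. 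So the two proofs are organized around different central objects (image of a retraction versus fixed-point set), and the paper never needs to know that limit maps send $\Omega$ into $\Omega$.

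That last point is exactly where your argument has a genuine gap. You assert that every limit $h$ of $\{\phi^n\}$ satisfies $h(\Omega)\subset\Omega$, appealing to an ``open mapping/Hurwitz'' principle. But Hurwitz-type arguments require the limit to have full rank $k$ near the point in question, and the whole point of case~(ii) is that the maximal rank $r_\phi$ may be strictly less than $k$; a rank-deficient limit is not open, so nothing forbids $h$ from sending interior points of $\Omega$ to $\partial\Omega$. Without $h(\Omega)\subset\Omega$ you cannot form the compact semigroup $\Gamma\subset\mathrm{Hol}(\Omega,\Omega)$, you cannot apply the retraction theorem to $\rho$, and the compact-group conclusion in case~(ii) collapses. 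The paper sidesteps this entirely: since $M$ is \emph{defined} as a fixed-point set inside $\Omega$, it lies in $\Omega$ automatically, and the subsequent analysis takes place on $M$ rather than on all of $h(\Omega)$.

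A second, smaller gap: you do not actually establish $\phi(\Omega)\subset\Omega$. From $\phi(\Omega)\subset F(G)$ and recurrence you only get $\Omega_{\phi^{n_j}}=\Omega$ along a subsequence; to upgrade this to $\phi(\Omega)\subset\Omega$ one needs the period argument the paper gives (pass to the $\phi$-Fatou component $\Omega_\phi\supset\Omega$, show its period under $\phi$ is $1$ using recurrence applied to the sequence $\{\phi^{lk+1}\}$). Also, your appeal to Proposition~\ref{recurrence_prop} is formally illegitimate since that proposition assumes finitely many generators, though the needed local boundedness follows directly from the definition of recurrence. If you repair the $\phi(\Omega)\subset\Omega$ step and replace the unsupported ``limits stay interior'' claim by the fixed-point-set construction (or otherwise justify interiority), the rest of your outline can be made to work.
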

	\begin{proof}
	 Since $\Omega \subset F(G)$, there exists a recurrent Fatou component of the map $\phi$ (say $\Omega_{\phi}$) such that $\Omega \subset \Omega_{\phi},$ i.e., there exists an integer $l\geq1$ such that 
	 \[\phi^l(\Omega_\phi) \cap \Omega_{\phi}\neq \emptyset\;\; \text{and} \;\; \phi^{m}(\Omega_\phi) \cap \Omega_{\phi}= \emptyset \] 
	 for $0 \leq m < l. $ So, if $l>1$ then there do not exist any $p \in \Omega$ such that any subsequence of $\{\phi^{lk+1}(p)\}_{k\geq1}$ converges to a point in $\Omega$. Hence $l=1$ and by assumption it follows that $\phi(\Omega) \subset \Omega.$
	 
	 
	 
	 \medskip
	 
\no Let $h$ be a limit function of $\{\phi^n\}$ of maximal rank (say  $r_\phi$). i.e., 
\[h(p)=\lim_{j \to \infty} \phi^{n_j}(p) \;\; \text{for every}\;\; p \in \Omega,\]
where $\{n_j\}$ is an increasing subsequence of natural numbers.
	 
%

	 \medskip
	 
	\no {\it Case 1:} If $r_{\phi}=0.$ Then $h(\Omega)=p_0$ for some $p_0 \in \Omega$ since by recurrence there exists a point $p \in \Omega$, such that $\phi^{n_j}(p) \to p_0$ and $p_0 \in \Omega.$ Also $h(p_0)=p_0.$  Then
	 \[ \phi(p_0)= \phi(h(p_0))=h(\phi(p_0))=p_0,\]
	i.e., $p_0$ is a fixed point of $\phi.$ As some sequence of iterates of $\phi$ converge to a constant function, $p_0$ is an attracting fixed point for $\phi.$
	
	\medskip
	
	\no {\it Case 2:} If $r_{\phi} \geq 1.$ Then there exists an increasing subsequence $\{m_j\}$ such that
	 \[ p_j=m_{j+1}-m_j \]
	  are increasing positive integers  and the sequences $\{\phi^{m_j}\}$ and $\{\phi^{p_j}\}$ converge uniformly to the limit functions  ${h}$  and $\tilde{h}$ respectively on the Fatou component $\Omega.$ Since by recurrence $h(\Omega) \cap \Omega \neq \emptyset$, if $p \in \Omega$ be such that $p =h(q)$ for some $q \in \Omega$ then
	  \[\tilde{h}(p)= \lim_{j \to \infty} \phi^{m_{j+1}-m_j}(p)=\lim_{j \to \infty}\phi^{m_{j+1}-m_j}(\phi^{m_j}(q))=p\]
	  Define \[ M=\{x \in \Omega : \tilde{h}(x)=x \}.\]
	  {\it Claim:} $M$ is a closed complex submanifold of $\Omega.$ 
	  
	  \medskip
	  
	  \no Since $h(\Omega) \cap \Omega \subset M$, $M$ is a variety of dimension $\ge r_\phi$. But by the choice of $h$, the generic rank of $\tilde{h} \le r_{\phi}$ and $M \subset \tilde{h}(\Omega) \cap \Omega.$ So the dimension of $M$ is $r_{\phi}.$ Now for any point in $M,$ the rank of the derivative matrix of ${\rm Id}-\tilde{h}$ is greater than or equal to $k-r_\phi$. Suppose for some $x \in M$ the rank of  $D({\rm Id}-\tilde{h})(x)>k-r_\phi,$ then there exists a small neighbourhood of $x$, say $V_x$ such that $V_x \subset \Omega$ and \[\text{rank of }{\rm Id}-\tilde{h}>k-r_\phi \;\; \text{for every }\;\; x \in  V_x.\]Then $\{{\rm Id}-\tilde{h}\}^{-1}(0) \cap V_x$ is a variety of dimension at most $r_\phi-1$ i.e., the dimension of $M$ is strictly less than $r_\phi,$ which is a contradiction. Thus the rank of ${\rm Id}-\tilde{h}$ is $k-r_\phi$ for every point in $M$ and hence $M$ is a closed submanifold of $\Omega.$
	  


\medskip

\no {\it Step 1:} Suppose that $r_{\phi}=k.$

\medskip

\no Then clearly $M=\Omega$ and $\tilde{h}$ on $\Omega$ is the identity map. Let $h_2= \lim \phi^{p_j-1}.$ Then 
\[\tilde{h}(x)=h_2 \circ \phi(x)=x, \; \; \text{for every}\;\; x \in \Omega\]
i.e., $\phi$ is injective on $\Omega$ and $\phi(\Omega)$ is an open subset of $\Omega.$ Suppose there exists an $x \in \Omega \setminus \phi(\Omega)$ then for a sufficiently small ball of radius $r>0$ with $B_r(x) \subset \Omega$
\[
\phi^l(\Omega) \cap B_r(x)=\emptyset \;\;\text{for every}\;\; l \ge1.
\] 
This contradicts that $\phi^{p_j} (x)\to x.$ Hence $\phi$ is surjective on $\Omega$ and hence an automorphism of $\Omega.$

\medskip
%
%
%

%


\no {\it Step 2:} Suppose that $ 1 \le r_{\phi} \le k-1. $ Let $ M_\phi  $ denote an irreducible component of $ M. $ For every $ q \in M_{\phi}  $,  it follows that $ \phi^{p_j}(q) \to q $ as $j \to \infty.$ Since $  \phi(\Omega) \subset \Omega $, we get $\phi^n(q) \in \Omega$ for every $n \ge 1$ and 
\[ \tilde{h} \circ \phi^n(q)=\phi^n \circ \tilde{h}(q)=\phi^n(q) \; \text{for every}\; q \in M_{\phi},\]
i.e., $\phi^n(M_\phi) \subset M$ for every $n \ge 1.$ 

\medskip

\no{\it Claim:} There exists a positive integer $l_\phi$ such that $ \phi^{l_{\phi}}(M_{\phi}) \subset M_{\phi} .$ 

\medskip 

\no Let $p_0 \in M_\phi$ and $ \De \subset \Omega$ be a polydisk at $ p_0 $ such that $\De$ does not intersect the other components of $M_{\phi} . $ Now choose $\De' \subset \De$, a sufficiently small polydisk such that $\tilde{h}(\De') \subset \De.$ Then $\omega =\tilde{h}(\De') \subset M_\phi$ is a $r_\phi-$ dimensional manifold. Let $ \De'' $ be a $ r_\phi -$ dimensional polydisk inside $ \omega $ and   $ \{w_l\}_{l \ge 1}$ be a sequence in $\De''$ such that it converges to some $ w_0 \in  \De''. $ But $\phi^{p_j}(w_{p_j}) \to w_0$ as $j \to \infty$ hence \[ \phi^{p_j}(M_\phi) \cap \De \neq \emptyset,\; \text{i.e.,}\; \phi^{p_j} (M_\phi)\subset(M_\phi)  \] for $j$ sufficiently large. Let $l_\phi$ be the minimum value  such that $M_\phi$ is invariant under $\phi^{l_{\phi}}.$

\medskip

\no{\it Claim:} $ \phi^{l_\phi} $ is an automorphism of $ M_\phi. $

\medskip

\no Without loss of generality there exists a sequence $ \{k_j\} $ such that $p_j=i_0+k_jl_\phi$ for some $0 \le i_0 \le l_\phi -1$ i.e.,
\[ \phi^{i_0} \circ \phi^{k_jl_\phi}(x) \to x \;\; \text{for every} \;\; x \in M_{\phi}.\] As $ M_{\phi} $ is invariant under $ \phi^{l_\phi} $, the sequence $x_j=\phi^{k_jl_\phi}(x)$ lies in $ M_{\phi}. $ Again as before let $\De_x$ be a sufficiently small neighbourhood such that $\De_x \subset \Omega$ and $\De_x$ does not intersect the other components of $M.$ Since $\phi^{i_0}(x_j)  \in \De_x \cap M_\phi$ for large $j$, $\phi^{i_0}(M_\phi) \subset M_\phi.$ But $0 \le i_0 \le l_\phi-1$, i.e., $i_0=0$ and $\{\phi^{k_jl_\phi}\}$ converges uniformly to the identity on $M_{\phi}.$ Let $\psi=\lim \phi^{(k_j-1)l_\phi}$ then \[ \phi^{l_\phi} \circ \psi(x)=\psi \circ \phi^{l_\phi}(x)=x \;\; \text{for every}\;\; x \in M_\phi.\]
\no Hence $\phi^{l_\phi}$ is injective on $M_\phi$ and $\phi^{l_\phi}(M_\phi)$ is an open subset in the manifold $M_\phi$. Now as in {\it Step 1} observe that $\phi^{k_jl_\phi}$ converges to the identity on $M_\phi$ for an unbounded sequence $ \{k_j\} $, so $\phi^{l_\phi}$ is also surjective on $ M_{\phi} $. Thus the claim.

\medskip

\no Let $Y=\{\phi^{nl_\phi}\}_{n \ge 1} \subset {\rm Aut}(M_\phi).$

\medskip

\no {\it Claim:} $\bar{Y}$ is a locally compact subgroup of ${\rm Aut}(M_\phi).$ 

\medskip 

\no For some $\Psi \in Y$ and for a compact set $K \subset M_\phi$ consider the neighbourhood of $\Psi$ given by
\[V_\Psi (K, \ep)= \{\psi \in {\rm Aut}(M_\phi): \|\psi(z)-\Psi(z)\|_{K} < \ep \}.\]
One can choose $\ep$ and $K$ sufficiently small such that for every  sequence $\psi_j \in V_\Psi (K, \ep)$ there exists an open set $U \subset \Omega$ such that $\psi_j(U \cap M_\phi) \subset \bar{V}  \cap M_\phi \subset \Omega$, where $V$ is some open subset of $\Omega.$

\medskip

\no Since $\psi_j=\phi^{n_j l_\phi}$ for a sequence $\{n_k\}$ and $\Omega$ is a Fatou component, $\psi_j$ has a convergent subsequence in $\Omega.$ We choose appropriate subsequences such that the  limit maps $$ \Psi_1=\lim_{j \to \infty } \phi^{n_jl_{\phi}} \;\;\text{and}\;\; \Psi_2=\lim_{j \to \infty} \phi^{(k_j-n_j)l_\phi} $$ is defined on $\Omega.$ Also as $M_\phi$ is closed in $\Omega$,  $\Psi_i(M_\phi) \subset \overline{M_\phi}$ for every $i=1,2$ where $ \overline{M_\phi} $ denote the closure of $ M_\phi $ in $ \mbb C^k. $
Then $\Psi_1(U)  \subset \Omega$ and
\begin{align}\label{injective}
\Psi_2 \circ \Psi_1(x)=x \;\; \text{for every}\;\; x \in U \cap M_\phi.
\end{align}  
Since $\Psi_1$ on $M_\phi$ is a limit of automorphisms of $M_\phi$, the  Jacobian of $\Psi_1$ on the manifold $M_\phi$ is either non-zero  at every point of $M_\phi$ or vanishes identically. But by (\ref{injective}),  $\Psi_1$ restricted to $U \cap M_{\phi}$ is injective, which is open in the manifold $M_\phi$ i.e., $\Psi_1$  is an open map of $M_\phi$ and $\Psi_1(M_\phi) \subset M_{\phi}.$ So  (\ref{injective}) is true for every $x \in M_\phi.$  Now by the same arguments it follows that  $\Psi_2$
is an injective map from $M_\phi$ such that $\Psi_2(M_\phi) \subset M_{\phi}.$ Hence
\[\Psi_2 \circ \Psi_1(x)=\Psi_1 \circ \Psi_2(x)=x \;\; \text{for every}\;\; x \in M_\phi,\]
i.e. $\Psi_1$ is an automorphism of $M_\phi.$ This proves that $\bar{Y}$ is a locally compact  subgroup of ${\rm Aut(M_\phi)}.$

\medskip

\no Now since $M_\phi$ is a complex manifold and $\bar{Y}$ is a locally abelian subgroup of automorphisms of $M_\phi$, by Theorem A in \cite{Bochner}, it follows that $\bar{Y}$ is a Lie group. Hence the component of $\bar{Y}$ containing the identity is isomorphic to $\mbb T^l \times \mbb R^m.$ Suppose $\Psi$ is the isomorphism,  then for some $n >0$, $\Psi(a,b)=\phi^{nl_\phi}$. Now if $b \neq 0$, then there does not exist an increasing sequence of $k_j$ such that $\phi^{k_j l_\phi}$ converges to identity. This proves that the component of $\bar{Y}$ containing the identity is compact and hence any component of $\bar{Y}$ is compact by the same arguments. Also as $M_\phi$ is contained in the Fatou set, the number of components of $\bar{Y}$ is finite, thus $\bar{Y}$ is a compact subgroup of ${\rm Aut}(M_\phi).$

\medskip

\no If $r_\phi=k$, then $M_\phi$ is $\Omega$, then one can apply the same technique as discussed above to conclude that $\overline{\{\phi^n\}}$ is a closed compact subgroup of ${\rm Aut}(\Omega).$

\medskip

\no Finally, let $f$ be a limit of $\{\phi^n\}_{n \ge 1}$ i.e., \[f(p)=\lim_{j \to \infty} \phi^{n_j}(p) \;\; \text{for every} \;\; p \in \Omega.\]

\no {\it Claim:} The generic rank of $f$ is $r_{\phi}.$

\medskip

\no By the definition of recurrence it follows that $\Omega \subset \Omega_{\phi}$, where $\Omega_\phi$ is a periodic Fatou component for $\phi$ with period $1.$ Hence by Theorem 3.3 in \cite{FS} it follows that the limit maps of the set $\{\phi^n\}$ in $\Omega_\phi$ have the same generic rank (say $r$). But $\Omega$ is an open subset of the Fatou component $\Omega_\phi$, so the rank of limit maps restricted to $\Omega$ should be same, i.e., $r=r_\phi$ and each limit map of $\{\phi^n\}$ has rank $r_\phi.$ 
\end{proof}

\no By Proposition \ref{recurrence_prop} a semigroup $G$ is always locally uniformly  bounded on a recurrent Fatou component semigroup $G$. If $G$ is finitely generated by holomorphic endomorphisms of maximal rank $k$ in $\mbb C^k$, then by Proposition \ref{bound} it follows that a recurrent Fatou component is mapped in the Fatou set by any elemnet of $G.$ Hence we have the following corollary.
\begin{cor}
	 Let $G= \langle \phi_1,\phi_2, \hdots,\phi_m \rangle$ where each $\phi_i \in \mathcal{E}_k$ for every $1 \le i \le m.$ Assume that $\Omega$ is a {\it recurrent} Fatou component of $G$ then for every $\phi \in G$ one of the following is true
	 \begin{enumerate}
	  \item [(i)] There exists an attracting fixed point (say $p_0$) in $\Omega$ for the map $\phi.$
	  \item [(ii)] There exists a closed connected submanifold $M_\phi \subset \Omega$ of dimension $r_\phi$ with $1 \le r_\phi \le k-1$  and an integer $l_\phi >0$ such that 
	  \begin{enumerate}
	  \item[(a)] $\phi^{l_\phi}$ is an automorphism of $M_{\phi}$ and $\overline{\{\phi^{nl_\phi}\}_{n \ge 1}}$ is a compact subgroup of ${\rm Aut}(M_\phi).$
	  \item[(b)] If $f \in \overline{\{\phi^n\}}$, then $f$ has maximal generic rank $r_\phi$ in $\Omega.$
	  \end{enumerate}
	  \item[(iii)] $\phi$ is an automorphism of $\Omega$ and $\overline{\{\phi^n\}}$ is a compact subgroup of ${\rm Aut}(\Omega).$ 
	 \end{enumerate}

	\end{cor}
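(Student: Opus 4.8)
The plan is to obtain the Corollary as a direct consequence of the preceding Theorem, using the two boundedness propositions to verify its hypothesis for \emph{every} $\phi \in G$. The key observation is that the preceding Theorem already establishes the trichotomy (i)--(iii) for any single $\phi \in G$ satisfying $\phi(\Omega) \subset F(G)$. So nothing new needs to be proved about the dynamics of $\phi$ on $\Omega$; all that remains is to show that, when $G$ is finitely generated by maps of maximal generic rank $k$, the hypothesis $\phi(\Omega) \subset F(G)$ is automatically satisfied by \emph{every} element of $G$, and then to quote the Theorem.

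First I would invoke Proposition \ref{recurrence_prop}. Since $\Omega$ is a recurrent Fatou component of $G = \langle \phi_1, \ldots, \phi_m \rangle$ with each $\phi_i \in \mathcal{E}_k$, that proposition applies verbatim and yields that $G$ is locally uniformly bounded on $\Omega$ (and, incidentally, that $\Omega$ is pseudoconvex and Runge). This local uniform boundedness is exactly the hypothesis needed to invoke Proposition \ref{bound}, which is where the finiteness of the generating set enters the argument: local uniform boundedness of the finitely generated semigroup $G$ on the Fatou component $\Omega$ forces $\phi(\Omega) \subset F(G)$ for every $\phi \in G$.

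Finally I would fix an arbitrary $\phi \in G$. By the previous step $\phi(\Omega) \subset F(G)$, so the hypothesis of the preceding Theorem is met for this $\phi$, and therefore one of the alternatives (i), (ii), (iii) holds for $\phi$. Since $\phi$ was an arbitrary element of $G$, the conclusion holds for every $\phi \in G$, which is precisely the assertion of the Corollary. (Here one applies the Theorem with the same recurrent component $\Omega$, noting that the Theorem is stated for semigroups $\langle \phi_1, \phi_2, \ldots \rangle$ generated by elements of $\mathcal{E}_k$, of which the present finitely generated $G$ is a special case.)

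I do not expect a substantive obstacle, as the argument is simply an assembly of already-established statements; the only point demanding care is the logical role of the finiteness hypothesis. Recurrence alone, through Proposition \ref{recurrence_prop}, supplies the local boundedness of $G$ on $\Omega$ with no finiteness assumption, but the passage from local boundedness on $\Omega$ to forward invariance of the Fatou set under each element of $G$ goes through Proposition \ref{bound}, which is proved only for finite generating sets. Thus the sole function of the hypothesis $G = \langle \phi_1, \ldots, \phi_m \rangle$ is to activate Proposition \ref{bound}, and hence to guarantee $\phi(\Omega) \subset F(G)$ for all $\phi \in G$ so that the preceding Theorem can be applied uniformly.
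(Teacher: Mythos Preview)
Your proposal is correct and follows precisely the paper's own argument: invoke Proposition~\ref{recurrence_prop} to obtain local uniform boundedness of $G$ on the recurrent component $\Omega$, then use Proposition~\ref{bound} (which needs the finite generating set) to conclude $\phi(\Omega)\subset F(G)$ for every $\phi\in G$, and finally apply the preceding Theorem to each such $\phi$. One small remark: Proposition~\ref{recurrence_prop} is in fact stated in the paper with the finite-generation hypothesis as well, so your aside that it ``supplies the local boundedness \ldots\ with no finiteness assumption'' slightly overstates what the proposition as written gives, even though the local-boundedness portion of its proof indeed does not use finiteness.
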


		\begin{ex}
		Let $G=\langle \phi_1,\phi_2 \rangle$ be a semigroup of entire maps in $\mbb C^2$  generated by 
		\[\phi_1(z,w)=(w, \alpha z-w^2) \; \; \mbox{and} \;\; \phi_2(z,w)=(zw,w)\]
		where $0<\alpha < 1.$ Then $G$ is locally uniformly bounded on a sufficiently small neighbourhood around the origin, and $\phi(0)=0$ for every $\phi \in G.$ So the Fatou component of $G$ containing $0$ (say $\Omega_0$) is recurrent. Now note that for $\phi_2$ 
		 $$r_{{\phi_2}}=1\;\; \mbox{ and} \;\;M_{\phi_2}=\{(0,w): w \in \mbb C\} \cap \Omega_0,$$ whereas for $\phi_1$ the origin is an attracting fixed point. This illustrates the different behaviour of the sequences $\{\phi_1^n\}$ and $\{\phi_2^n\}$ (both of which are in $G$) on $\Omega_0.$
		 
		 \medskip
		 
		\no Note that for any other $\phi \in G$ which is not of the form $\phi_1^k,~k \ge 2$, contains a factor of $\phi_2$ at least  once. Since for a small enough ball (say $B$) around origin, $\phi_2$ is contracting, and $\phi_1(B) \subset B$
		so there exists  a constant $0<a_\phi<1$ such that $$|\phi(z)| \le a_{\phi} |z| \;\; \text{for every} \;\; z \in B,$$i.e., the origin is an attracting fixed point.  
	  		\end{ex}
		\begin{prop}
		Let $G=\langle \phi_1,\phi_2,\hdots ,\phi_m\rangle$ where each $\phi_i\in \mathcal{V}_k$ for every $1 \le i \le m$ and let $\Omega$ be an invariant Fatou component of $G$. Then either $\Omega$ is recurrent or there exists a sequence $\{\phi_n\}\subset G$ converging to infinity.
		\end{prop}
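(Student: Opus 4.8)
The plan is to argue by contraposition: I assume $\Omega$ is not recurrent and produce a sequence in $G$ that diverges to infinity on $\Omega$. Unwinding the definition of recurrence, non-recurrence means there is a \emph{single} sequence $\{g_j\} \subset G$ witnessing the failure, i.e.\ for every subsequence $\{g_{j_m}\}$ and every $p \in \Omega$ the points $g_{j_m}(p)$ do not converge to a point of $\Omega$. Since $\Omega \subset F(G)$, the family $G$ is normal on $\Omega$, so after passing to a subsequence I may assume that $\{g_{j_m}\}$ either diverges uniformly to infinity on compact subsets of $\Omega$, or converges uniformly on compacts to a holomorphic map $h \colon \Omega \to \mbb C^k$. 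In the first case $\{g_{j_m}\}$ is exactly the desired sequence converging to infinity, and we are done.

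So suppose instead that $g_{j_m} \to h$ uniformly on compacts. Here I would exploit volume preservation. Each generator $\phi_i \in \mathcal{V}_k$ is an automorphism of $\mbb C^k$ whose complex Jacobian $\det D\phi_i$ is a nowhere-vanishing entire function of constant modulus, hence a unimodular constant; by the chain rule every $g_j$, being a composition of such maps, also satisfies $|\det D g_j| \equiv 1$. Because uniform convergence on compacts forces uniform convergence of the derivatives, $\det Dh = \lim_m \det D g_{j_m}$ has modulus one everywhere, so $h$ is everywhere a local biholomorphism, in particular an open map, and $h(\Omega)$ is an open subset of $\mbb C^k$.

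Next I would bring in the invariance hypothesis. Since $\Omega$ is invariant, $g_j(\Omega) \subset \Omega$ for every $j$, so for each $p \in \Omega$ we have $g_{j_m}(p) \in \Omega$ and therefore $h(p) = \lim_m g_{j_m}(p) \in \overline{\Omega}$; that is, $h(\Omega) \subset \overline{\Omega}$. Combining this with openness: $h(\Omega)$ is open and contained in $\overline{\Omega} = \Omega \cup \partial\Omega$, while $\partial\Omega$ has empty interior as the boundary of an open set, whence $h(\Omega) \subset \Omega$. But then for any fixed $p \in \Omega$ we get $g_{j_m}(p) \to h(p) \in \Omega$, contradicting the choice of $\{g_j\}$ as a witness to non-recurrence. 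This rules out the convergent case and completes the argument.

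The routine pieces are the normality dichotomy (from $\Omega \subset F(G)$) and the chain-rule bookkeeping for the Jacobian. The step carrying the real content is the use of volume preservation to guarantee that any finite limit $h$ has maximal rank $k$ and is therefore open; this is what prevents a degenerate limit that could squeeze $\Omega$ onto its boundary, and it is the analogue in the semigroup setting of the rank rigidity used by Fornaess--Sibony \cite{FS} for the iterates of a single volume-preserving automorphism. The only point demanding care is the precise logical negation of the recurrence definition, so as to be sure that one bad sequence suffices and that the resulting contradiction is genuine.
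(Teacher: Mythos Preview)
Your argument is correct and follows essentially the same line as the paper's proof: negate recurrence to obtain a bad sequence, use normality to split into the divergent and convergent cases, and rule out a finite limit by invoking volume preservation together with invariance of $\Omega$. The only cosmetic difference is the order in which the contradiction is reached—the paper first concludes $f(\Omega)\subset\partial\Omega$ and then notes this is incompatible with volume preservation, whereas you first use $|\det Dh|\equiv 1$ to make $h$ open and then derive $h(\Omega)\subset\Omega$, contradicting non-recurrence; the content is identical and your version spells out the Jacobian step more explicitly.
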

		\begin{proof}
		If $\Omega$ is not recurrent, then there exists a sequence $\{\phi_n\}\subset G$ such that $\{\phi_n\} \ra \partial \Omega \cup \{\infty\} $ uniformly on compact sets of $\Omega$. Assume $\{\phi_{n_k}\}$ converges to a holomorphic function $f$ on $\Omega$ . This implies that $f(\Omega)\subset \partial\Omega$ contradicting the assumption that each $\phi_{n_k}$ is volume preserving. Hence $\{\phi_{n_k}\}$ diverges to infinity uniformly on compact subsets of $\Omega$.
		\end{proof}
		\begin{prop}
		Let $G=\langle \phi_1,\phi_2,\hdots ,\phi_m \rangle$ where each $\phi_i\in \mathcal{V}_k$ for every $1 \le i \le m$ and let $\Omega$ be a wandering Fatou component of $G$. Then there exists a sequence $\{\phi_n\}\subset G$ converging to infinity.
		\end{prop}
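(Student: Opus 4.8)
The plan is to reduce to a normality dichotomy and kill the convergent alternative using volume preservation. Since $\Om$ is wandering and every $\phi_i \in \mathcal{V}_k$ is a biholomorphism with nonvanishing Jacobian (so that $\Sigma_\phi = \emptyset$ for every $\phi \in G$ and $\Om_\phi$ is simply the Fatou component containing $\phi(\Om)$), the collection $\{\Om_\phi : \phi \in G\}$ is infinite. Hence I would first choose a sequence $\{\phi_n\} \subset G$ for which the components $\Om_{\phi_n}$ are pairwise distinct. As $\Om$ is a Fatou component, normality supplies a subsequence $\{\phi_{n_k}\}$ that, on compact subsets of $\Om$, either diverges uniformly to infinity or converges uniformly to a holomorphic map $f : \Om \to \mbb C^k$. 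In the first case $\{\phi_{n_k}\}$ is already the desired sequence, so it suffices to rule out the second case.

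The crucial point is that volume preservation forces the limit $f$ to be nondegenerate. For any $\phi \in \mathcal{V}_k$ the function $\det D\phi$ is holomorphic with $|\det D\phi| \equiv 1$, hence a unimodular constant; by the chain rule the same holds for every element of $G$, so $\det D\phi_{n_k} \equiv c_{n_k}$ with $|c_{n_k}| = 1$. Passing to a further subsequence so that $c_{n_k} \to c$, and using that $\phi_{n_k} \to f$ uniformly on compacts (whence $D\phi_{n_k} \to Df$ by the Cauchy estimates), I obtain $\det Df \equiv c$ with $|c| = 1$. Thus $f$ has nonvanishing Jacobian everywhere and is in particular an open map.

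Finally I would fix $p \in \Om$ and set $q = f(p)$. Choosing a small closed ball $\ov{B(p,r)} \subset \Om$ on which $f$ is a biholomorphism onto a neighbourhood of $q$, a Hurwitz/argument-principle argument (using $\det Df \neq 0$ together with $\phi_{n_k} \to f$ uniformly on $\ov{B(p,r)}$) produces a fixed radius $\rho > 0$ with $B(q,\rho) \subset \phi_{n_k}(B(p,r)) \subset \phi_{n_k}(\Om)$ for all large $k$. Since $\phi_{n_k}(\Om) \subset \Om_{\phi_{n_k}}$, the ball $B(q,\rho)$ lies in $\Om_{\phi_{n_k}}$ for all large $k$; as distinct Fatou components are disjoint, this forces all these components to coincide, contradicting the choice of $\{\phi_n\}$. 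Hence the convergent alternative is impossible and $\{\phi_{n_k}\}$ must diverge to infinity. I expect the main obstacle to be precisely this last overlap step: guaranteeing that the images $\phi_{n_k}(\Om)$ genuinely contain a common open ball. This is exactly where the nondegeneracy of $f$ extracted from volume preservation is indispensable, for without it the limit could collapse dimensions and the overlap argument would break down.
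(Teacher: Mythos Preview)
Your argument is correct, but the paper reaches the contradiction by a different and somewhat more direct route. After extracting a subsequence $\{\phi_{n_k}\}$ converging to a holomorphic limit $h$ on $\Om$, the paper does not analyze the Jacobian of $h$ at all. Instead it fixes $z_0\in\Om$ and, using uniform Cauchy-ness of $\{\phi_{n_k}\}$ on a small ball $B(z_0,\delta)$, observes that all images $\phi_{n_k}(B(z_0,\delta))$ for $k\ge k_0$ lie in a fixed bounded set, hence their union has finite Lebesgue volume. On the other hand, these images sit in the pairwise disjoint components $\Om_{\phi_{n_k}}$ and each has volume exactly $\mathrm{vol}(B(z_0,\delta))$ because every $\phi_{n_k}\in\mathcal{V}_k$ is volume preserving; summing gives infinite volume, a contradiction.

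So both proofs exploit volume preservation, but differently: the paper uses it as a measure-theoretic statement (images have fixed positive volume and are disjoint), while you use it pointwise via $|\det D\phi|\equiv 1$ to force the limit $f$ to be locally biholomorphic and then run a Hurwitz/overlap argument. The paper's route is shorter and avoids the degree-theoretic step; your route has the advantage of actually identifying the limit as nondegenerate, which is extra information. Note incidentally that since each $\phi_{n_k}$ is already a global biholomorphism, your overlap step can be argued without any Hurwitz machinery: $\phi_{n_k}(B(p,r))$ is open with boundary $\phi_{n_k}(\partial B(p,r))$, the latter stays at distance $>\rho$ from $q$ while $\phi_{n_k}(p)\to q$, so the connected ball $B(q,\rho)$ meets $\phi_{n_k}(B(p,r))$ but not its boundary and is therefore contained in it.
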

		\begin{proof}
		Since $\Omega$ is wandering, one can choose a sequence $\{\phi_n \}\subset G$ so that
		\begin{equation}
		 \Omega_{\phi_n}\cap  {\Omega}_{\phi_m}=\emptyset \label{0}
		\end{equation}
		for $n\neq m$. If this sequence $\{\phi_n\}$ does not diverge to infinity uniformly on compact subsets, some subsequence $\{\phi_{n_k}\}$ will converge to a holomorphic function $h$ on $\Omega$. By abuse of notation, we denote $\{\phi_{n_k}\}$ still by $\{\phi_n \}$. Fix $z_0\in \Omega$. Then for any given $\epsilon$, there exists $\delta$ such that
		\begin{equation}
		\left|\phi_{n_0}(z)-\phi_n(z)\right|<\epsilon \label{1}
		\end{equation}
		for all $n\geq n_0$ and for all $z\in B(z_0,\delta)$. From (\ref{1}) it follows that vol$(\cup_{n\geq n_o}\phi_n(B(z_0,\delta)))$ is finite. On the other hand, since each ${\phi_n}$ is volume preserving and (\ref{0}) holds, we get \[\mbox{Vol}\Big(\bigcup_{n\geq n_o}\phi_n\big(B(z_0,\delta)\big)\Big) =+\infty.\] Hence we have proved the existence of  a sequence in $G$ converging to infinity.
		\end{proof}
		
		\section{Concluding Remarks} 
		As mentioned in the introduction, the classification of recurrent Fatou components for iterations of holomorphic endomorphisms of complex projective spaces  has been studied in \cite{FS2} and \cite{Fornaess-Rong}. It would be interesting to explore the same question for semigroups of holomorphic endomorphisms of complex projective spaces.  The main theorem in \cite{FS2} and \cite{Fornaess-Rong} are proved under the assumption that the given recurrent Fatou component is also forward invariant. The analogue of such a condition in the case of semigroups is not clear to us since we are then dealing with a family of maps none of which is distinguishable from the other. 
	\bibliographystyle{amsplain}
	\bibliography{ref}
	\end{document}